\newcommand{\R}{\mathbb{R}}
\newcommand{\vep}{\varepsilon}
\newcommand{\supp}{\operatorname{supp}}
\newcommand{\dist}{\operatorname{dist}}
\newtheorem{thm}{Theorem}[section]
\newtheorem{prop}[thm]{Proposition}
\newtheorem{cor}[thm]{Corollary}
\newtheorem{lem}[thm]{Lemma}
\theoremstyle{definition}
\newtheorem{defn}[thm]{Definition}
\newtheorem{rem}[thm]{Remark}
\numberwithin{equation}{section}
\author[L.~A.~Caffarelli]{Luis A.~Caffarelli}
\author[M.~Soria-Carro]{Mar\'ia Soria-Carro}
    \address{Department of Mathematics\\
    The University of Texas at Austin\\
    Austin, TX 78712, USA}
    \email{caffarel@math.utexas.edu, maria.soriac@math.utexas.edu}
\author[P.~R.~Stinga]{Pablo Ra\'ul Stinga}
    \address{Department of Mathematics\\
    Iowa State University\\
    396 Carver Hall, Ames, IA 50011, USA}
    \email{stinga@iastate.edu}
\keywords{Transmission problem, elliptic regularity, mean value property}
\subjclass[2010]{Primary: 35B65. Secondary: 35B05}
\thanks{Research supported by NSF grant 1500871 (Caffarelli) and
Simons Foundation grant 580911 (Stinga)}
\begin{document}

\title[Regularity for $C^{1,\alpha}$ interface transmission problems]{Regularity for $C^{1,\alpha}$ interface transmission problems}

\begin{abstract}
We study existence, uniqueness, and optimal regularity of solutions to transmission problems for harmonic functions
with $C^{1,\alpha}$ interfaces.
For this, we develop a novel geometric stability argument based on the mean value property.
\end{abstract}

\maketitle

\section{Introduction}

Transmission problems in classical elasticity theory were first introduced by M.~Picone in 1954, see \cite{Picone}.
In the following years, contributions were made by J.~L.~Lions \cite{Lions}, G.~Stampacchia \cite{Stampacchia}
and S.~Campanato \cite{Campanato}.
In 1960, M.~Schechter generalized the theory to include smooth elliptic operators in nondivergence form in domains 
with smooth interfaces \cite{Schechter}.
Since then, transmission problems have been of great interest due to their applications in different areas in science.
For instance, O.~A.~Ladyzhenskaya and N.~N.~Ural'tseva considered in \cite{Ladyzhenskaya-Uraltseva}
the so-called diffraction problem.
For other recent developments, see \cite{Citti-Ferrari,Li-Nirenberg,Li-Vogelius,Mateu-Orobitg-Verdera}.

As a particular feature, and in contrast with free boundary problems,
transmission problems deal with a fixed interface where solutions change abruptly and 
the primary focus is to study their behavior across this surface. Additionally, these problems cannot be treated
separately as boundary value problems per se, as solutions interact with each other from each side of the interface
through the transmission condition.

We study existence, uniqueness and regularity of solutions to a transmission problem for harmonic functions.
One of our main novelties is that the transmission interface has only $C^{1,\alpha}$ regularity.
Furthermore, we build up a new fine geometric argument based on the mean value property to
show that solutions are $C^{1,\alpha}$ up to each side of the interface.

The setting is the following. Let $\Omega$ be a smooth, bounded domain of $\R^n$, $n\geq2$. Let $\Omega_1$ be a subdomain of $\Omega$
such that $\Omega_1\subset\subset\Omega$ and set $\Omega_2=\Omega\setminus\overline{\Omega}_1$.
Suppose that the interface $\Gamma$ between $\Omega_1$ and $\Omega_2$, namely,
 $\Gamma=\partial\Omega_1$, is a $C^{1,\alpha}$ manifold, for some $0<\alpha<1$.
Then $\Omega=\Omega_1\cup\Omega_2\cup\Gamma$. 
For a function $u:\overline{\Omega}\to\R$ we denote
$$u_1=u\big|_{\overline{\Omega}_1}\qquad\hbox{and}\qquad u_2=u\big|_{\overline{\Omega}_2}.$$
We consider the problem of finding a continuous function $u:\overline{\Omega}\to\R$ such that
\begin{equation} \tag{TP} \label{eq:transmissionproblem} 
\begin{cases}
\Delta u_1=0&\hbox{in}~\Omega_1\\
\Delta u_2=0&\hbox{in}~\Omega_2\\
u_2=0&\hbox{on}~\partial\Omega\\
u_1=u_2&\hbox{on}~\Gamma\\
(u_1)_\nu-(u_2)_\nu=g&\hbox{on}~\Gamma.
\end{cases}
\end{equation}
Here $g\in C^{0,\alpha}(\Gamma)$ and $\nu$ is the unit
normal vector on $\Gamma$ that is interior to $\Omega_1$, see Figure \ref{figure1}.
This is a transmission problem in the spirit of Schechter in \cite{Schechter}, where $\Gamma$ is the transmission interface.
In contrast to our problem, \cite{Schechter} only deals with $\Gamma\in C^\infty$.
The last two equations on \eqref{eq:transmissionproblem} are called the {\it transmission conditions}. 

\begin{figure}[h]\label{figure1}
\centering
 \begin{tikzpicture}[scale=0.45, use Hobby shortcut, closed=true]
\draw[blue] (-3,0) .. (-3.5,1) ..(-2.5,1)  ..(-2,1.5).. (-1,3.5).. (1.5,3).. (4,3.5).. (4,3)..(4.5,2.5).. (5,2.5).. (5,0.5) ..(4.5,-1).. (3.5,-1.5)..(2.5,-2)..(1,-1).. (0,-2)..(-0.5,-1.5).. (-3,-2).. (-3,0);
\draw (1,0.5) ellipse (7cm and 5.5cm);
\node[] at (1,1) {\large $\Omega_1$};
\node[] at (1,-3.3) {\large $\Omega_2$};
\node[blue] at (5.6,-0.5) {\large $\Gamma$};
\node[] at (8,4) {\large $\Omega$};
\draw[line width=0.5pt, -stealth](-3,-2)--(-2.1,-1) node[anchor=south west]{$\nu$};
    \end{tikzpicture}
\caption{Geometry for the transmission problem \eqref{eq:transmissionproblem}.}
\end{figure}
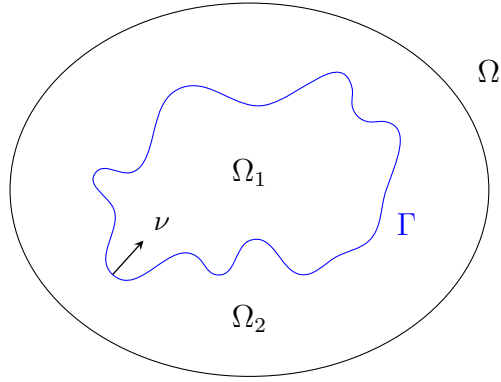

If in \eqref{eq:transmissionproblem} we set $g\equiv0$ then $u$ is a harmonic function in $\Omega$.
Therefore, in order to have a meaningful elliptic transmission condition, we assume that
$$g(x)\geq0\qquad\hbox{for all}~x\in\Gamma.$$
Hence, $u$ will not be differentiable at those points on $\Gamma$ where $g>0$. 
In turn, we prove that $u$ is $C^{1,\alpha}$ from each side up to $\Gamma$.
In \eqref{eq:transmissionproblem} we have also imposed homogeneous Dirichlet boundary condition on $\partial\Omega$.
This is not a restriction since we can always add to $u$ a harmonic function $v$ in $\Omega$ such that
$v=\phi$ on $\partial\Omega$, to make $u_2=\phi$ on $\partial\Omega$. 
The one dimensional case is excluded because one can easily find explicit solutions.

Our main result is the following.

\begin{thm}\label{thm:main}
There exists a unique classical solution $u$ to the transmission problem \eqref{eq:transmissionproblem}.
Moreover, $u_1 \in C^{1,\alpha}(\overline{\Omega}_1)$, $u_2 \in C^{1,\alpha}(\overline{\Omega}_2)$,
and there exists $C=C(n,\alpha,\Gamma)>0$ such that
$$\|u_1\|_{C^{1,\alpha}(\overline{\Omega}_1)}+\|u_2\|_{C^{1,\alpha}(\overline{\Omega}_2)}\leq C\|g\|_{C^{0,\alpha}(\Gamma)}.$$
\end{thm}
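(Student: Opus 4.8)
\medskip
\noindent\textbf{Proof strategy (proposal).}
The plan is to combine a routine variational construction of the (weak, hence classical) solution with a compactness-driven improvement-of-flatness scheme at the interface; the only genuinely new ingredient is the geometric/mean-value input that furnishes compactness near the (moving) interface.

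\smallskip
\noindent\emph{Step 1: weak formulation, existence, uniqueness, reduction.}
Testing \eqref{eq:transmissionproblem} against $v\in H^1_0(\Omega)$ and integrating by parts on each side (using that $\nu$ is the interior normal of $\Omega_1$), a classical solution must satisfy
\begin{equation}\label{eq:weakform}
\int_\Omega\nabla u\cdot\nabla v\,dx=-\int_\Gamma g\,v\,d\sigma\qquad\text{for every }v\in H^1_0(\Omega).
\end{equation}
Since $g\in C^{0,\alpha}(\Gamma)\subset L^2(\Gamma)$ and the trace operator $H^1_0(\Omega)\to L^2(\Gamma)$ is bounded, the right-hand side of \eqref{eq:weakform} is a bounded linear functional, so minimizing $v\mapsto\tfrac12\int_\Omega|\nabla v|^2\,dx+\int_\Gamma gv\,d\sigma$ (equivalently, Lax--Milgram) yields a unique $u\in H^1_0(\Omega)$ solving \eqref{eq:weakform}, with $\|u\|_{H^1(\Omega)}\le C\|g\|_{L^2(\Gamma)}\le C\|g\|_{C^{0,\alpha}(\Gamma)}$. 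Interior elliptic regularity makes each $u_i$ harmonic, hence $C^\infty$, in $\Omega_i$, and boundary regularity makes $u_2$ smooth up to the smooth boundary $\partial\Omega$; local boundedness estimates for \eqref{eq:weakform} give $u\in C(\overline\Omega)$ and $\|u\|_{L^\infty(\Omega)}\le C\|g\|_{C^{0,\alpha}(\Gamma)}$. Once $C^{1,\alpha}$-regularity up to $\Gamma$ from each side is established, integrating \eqref{eq:weakform} by parts back recovers all five equations of \eqref{eq:transmissionproblem} pointwise, so $u$ is the desired classical solution. Uniqueness among classical solutions is in fact elementary: the difference $w$ of two of them solves \eqref{eq:transmissionproblem} with $g\equiv0$, so $w$ and $\nabla w$ agree from both sides along $\Gamma$, whence $\Delta w=0$ in $\mathcal D'(\Omega)$ and $w$ is harmonic in all of $\Omega$ with $w=0$ on $\partial\Omega$; thus $w\equiv0$. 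It therefore remains to prove the \emph{local} statement: for each $x_0\in\Gamma$, $u_1$ and $u_2$ are $C^{1,\alpha}$ up to $\Gamma$ near $x_0$, quantitatively and uniformly in $x_0$.

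\smallskip
\noindent\emph{Step 2: flattening and the first-order model.}
Fix $x_0\in\Gamma$ and normalize (by translation, rotation, dilation) so that $x_0=0$, $\nu(0)=e_n$, and $\Gamma=\{x_n=\varphi(x')\}$ near $0$ with $\varphi(0)=0$, $\nabla\varphi(0)=0$, $\varphi\in C^{1,\alpha}$ and $[\nabla\varphi]_{C^{0,\alpha}}$ as small as we wish. Introduce the class $\mathcal L=\mathcal L(g(0))$ of \emph{first-order transmission models}: functions $\ell$ that are affine on $\{x_n>0\}$ and on $\{x_n<0\}$, continuous across $\{x_n=0\}$, and satisfy $\partial_{x_n}\ell(x',0^+)-\partial_{x_n}\ell(x',0^-)=g(0)$; equivalently $\ell(x)=a+p\cdot x'+b\,x_n+\tfrac{g(0)}2|x_n|$ with $(a,p,b)\in\R\times\R^{n-1}\times\R$. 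The target is the Campanato-type decay: there exists $\ell\in\mathcal L$ (depending on $x_0$) such that
\begin{equation}\label{eq:targetdecay}
\|u-\ell\|_{L^\infty(B_r\cap\Omega)}\le C\big(\|g\|_{C^{0,\alpha}(\Gamma)}+[\nabla\varphi]_{C^{0,\alpha}}\big)\,r^{1+\alpha}\qquad\text{for all small }r>0.
\end{equation}
Granting \eqref{eq:targetdecay} uniformly over $x_0\in\Gamma$ and combining it with interior derivative estimates for harmonic functions in $\Omega_1$ and $\Omega_2$, a standard covering/Campanato argument yields $u_1\in C^{1,\alpha}(\overline\Omega_1)$, $u_2\in C^{1,\alpha}(\overline\Omega_2)$ together with the bound in the statement, $C=C(n,\alpha,\Gamma)$.

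\smallskip
\noindent\emph{Step 3: improvement of flatness.}
I would obtain \eqref{eq:targetdecay} by iterating the following one-step estimate: there are $\rho\in(0,\tfrac12)$ and $C_0$, depending only on $n$ and $\alpha$, so that if $\|u-\ell\|_{L^\infty(B_r\cap\Omega)}\le\delta$ for some $\ell\in\mathcal L$, then there is $\ell'\in\mathcal L$ with $|\ell-\ell'|\le C_0\delta$ on $B_r$ and
\begin{equation}\label{eq:onestep}
\|u-\ell'\|_{L^\infty(B_{\rho r}\cap\Omega)}\le\rho^{1+\alpha}\delta+C_0\big(\|g\|_{C^{0,\alpha}(\Gamma)}+[\nabla\varphi]_{C^{0,\alpha}}\big)r^{1+\alpha}.
\end{equation}
Applying \eqref{eq:onestep} along dyadic scales $r_k=\rho^k r_0$ produces a convergent sequence of models and, on summing the geometric errors, \eqref{eq:targetdecay}. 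Estimate \eqref{eq:onestep} is proved by compactness and contradiction. If it fails, one gets interfaces $\Gamma^k$ whose rescalings become flat, data $g^k$ whose rescalings converge to the constants $g^k(0)$, and transmission solutions $u^k$, normalized so that the rescaled defects $\tilde u^k$ are uniformly bounded on $B_1$, have $0$ as optimal $\mathcal L$-model, yet admit no admissible improvement at scale $\rho$. The crucial point --- and precisely where the \emph{mean value property} enters --- is to extract from $\{\tilde u^k\}$ a uniform modulus of continuity \emph{up to} the moving interfaces $\Gamma^k$, with constants independent of $k$, so that $\tilde u^k\to u^\infty$ locally uniformly on $\overline{B_{1/2}}$ and the transmission condition passes to the limit; standard boundary Schauder estimates are unavailable here, since the estimate being proved is stronger than anything known a priori, so the mean-value/oscillation argument must supply the compactness and the geometric stability of the transmission condition under rescaling. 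The limit $u^\infty$ then solves the \emph{flat, constant-}$g$ transmission problem in $B_1$. For that model the even/odd reflection trick applies: with $\widetilde{u^\infty_2}(x',x_n):=u^\infty_2(x',-x_n)$, the harmonic functions $u^\infty_1\pm\widetilde{u^\infty_2}$ in $\{x_n>0\}$ satisfy, respectively, a homogeneous Dirichlet and a constant-data Neumann condition on $\{x_n=0\}$, hence are $C^\infty$ up to $\{x_n=0\}$; therefore $u^\infty_1,u^\infty_2\in C^\infty$ up to the interface, and in particular $\|u^\infty-\ell^\infty\|_{L^\infty(B_\rho)}\le C\rho^2$ for a suitable $\ell^\infty\in\mathcal L$ with bounded coefficients. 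Since $C\rho^2<\rho^{1+\alpha}$ once $\rho$ is small, this contradicts the non-improvability of $u^k$ for large $k$, proving \eqref{eq:onestep}.

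\smallskip
\noindent\emph{Main obstacle.}
The decisive difficulty is the compactness in Step 3: producing, for transmission solutions on $C^{1,\alpha}$ (and increasingly flat) interfaces, a uniform Hölder modulus of continuity up to the interface with constants independent of the approximating sequence. This is exactly what the paper's ``geometric stability argument based on the mean value property'' is designed to deliver; it stands in for the a priori boundary regularity that is not yet available, after which the blow-up limit can be taken and the rigidity of the flat model exploited. Everything else --- the variational construction, the reduction, and the dyadic iteration --- is then routine.
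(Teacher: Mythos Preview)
Your outline follows the classical Caffarelli-style compactness/improvement-of-flatness template, and that is a perfectly viable route to the result; but it is \emph{not} the route the paper takes, and your description of where the mean value property enters does not match what the authors actually do.

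The paper's proof is entirely \emph{constructive and quantitative}, with no compactness or blow-up step. The key ingredient is Theorem~\ref{thm:stability}: if $\Gamma$ is $\theta\vep$-flat and $\vep$-horizontal in $B_1$ and $|g-1|\le\delta$, then one builds \emph{by hand} a flat-interface solution $v$ (interface on $T_{-\theta\vep}$) and proves
\[
\|u-v\|_{L^\infty(B_{1/2})}\le C(\theta+\delta+\vep^\gamma).
\]
This is obtained by comparing the \emph{solid averages} $u_\vep$ and $\underline w_{M\vep}$: Proposition~\ref{lem:averages} shows that averaging turns the singular right-hand side into a genuine function $g_\vep$, and the geometric inclusions of Lemma~\ref{lem:medidas} give the pointwise inequality $\Delta u_\vep\le\Delta\underline w_{M\vep}$, so the maximum principle closes the estimate. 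Thus the mean value property is used to produce an \emph{explicit approximation with explicit rate}, not to supply equicontinuity for a contradiction sequence. Lemma~\ref{lem:basic} then reads off the first-step linear approximation $P_1,Q_1$ from the $C^{2,\alpha}$ regularity of $v$; the iteration (Section~\ref{Section:boundary}) is direct, and Proposition~\ref{thm:regularity} patches interior and boundary estimates via Campanato. Existence/uniqueness is also handled differently: the paper uses the Green's function representation \eqref{eq:uwithG} and proves global Log-Lipschitz regularity (Theorem~\ref{thm:existence}), which already gives $u\in C^{0,\gamma}$ for every $\gamma<1$---this, not a mean-value argument, would in fact give you the uniform modulus you need for compactness.

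What each approach buys: your compactness scheme is more flexible and generalizes readily to operators without a mean value formula, but is nonconstructive and gives no explicit constants. The paper's argument is self-contained (maximum principle and mean value only), gives explicit rates, and---as the authors stress---is potentially useful numerically; its price is the delicate geometric Lemma~\ref{lem:medidas}, and it is tied to the Laplacian. Your Step~3 is workable, but you should be aware that after subtracting $\ell\in\mathcal L$ the rescaled function has distributional Laplacian supported on \emph{two} nearby surfaces ($\Gamma^k$ and $\{x_n=0\}$); the paper sidesteps this by subtracting $\mathcal P_k=P_k\chi_{\Omega_1}+Q_k\chi_{\Omega_2}$ (piecewise linear along $\Gamma$ itself), at the cost of a small jump that is then corrected (see \eqref{eq:jump}--\eqref{eq:vclosetow}).
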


The appropriate notion of solution to \eqref{eq:transmissionproblem} comes from computing $\Delta u$
in the sense of distributions. Indeed, if $u$ and $\Gamma$ were sufficiently smooth and
$\varphi\in C^\infty_c(\Omega)$ then
\begin{align*}
(\Delta u)(\varphi) &= \int_\Omega u\Delta\varphi\,dx=\int_\Gamma\big((u_1)_\nu-(u_2)_\nu\big)\varphi\,dH^{n-1}=\int_\Gamma g\varphi\,dH^{n-1}.
\end{align*}
Thus $\Delta u$ is a singular measure concentrated on $\Gamma$ with density $g$. 
In Section \ref{Section:existence} we show that there exists a unique distributional solution $u\in C_0(\overline{\Omega})$ to \eqref{eq:transmissionproblem},
where $C_0(\overline{\Omega})$ denotes the space of continuous functions on $\overline{\Omega}$
that vanish on $\partial\Omega$. Moreover, we prove 
that $u$ is Log-Lipschitz in $\overline{\Omega}$, see Theorem \ref{thm:existence}. 
The main issue is the optimal regularity of $u$ up to $\Gamma$.
Theorem \ref{thm:main} will be a consequence of our next result.

\begin{thm}[Pointwise $C^{1,\alpha}$ boundary regularity] \label{thm:boundary}
Let $\Gamma=\{(y',\psi(y')):y'\in B_1'\}$, where $\psi$ is a $C^{1,\alpha}$ function, for some $0<\alpha<1$. Assume that $0\in \Gamma$. Let $u\in C(\overline{B_1})$ be a distributional solution to the transmission problem
$$\Delta u=g\,dH^{n-1}\big|_{\Gamma}$$
where $g\in L^\infty(\Gamma)$, {$g\geq 0$,} and $g\in C^{0,\alpha}(0)$. 
Then there are linear polynomials $P(x) = A \cdot x+B$, and $Q(x)=C\cdot x + B$ such that
\begin{align*}
|u_1(x)-P(x)|&\leq D |x|^{1+\alpha} \qquad \hbox{for all}~x\in \Omega_1 \cap B_{1/2}\\
|u_2(x)-Q(x)|&\leq D |x|^{1+\alpha} \qquad \hbox{for all}~x\in \Omega_2 \cap B_{1/2}
\end{align*}
with
$$
|A|+|B|+|C| + D\leq C_0 \|\psi\|_{C^{1,\alpha}(B_1')}\big([g]_{C^{\alpha}(0)}+\Vert g \Vert_{L^\infty{(\Gamma)}}\big)
$$
and $C_0=C_0(n,\alpha)>0$.
\end{thm}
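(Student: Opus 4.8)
The plan is to prove the two pointwise expansions simultaneously by an iteration/compactness argument at the origin, exploiting the mean value property as announced in the abstract. The central idea is a \emph{geometric stability} lemma: if $\Gamma$ is close (in $C^{1,\alpha}$) to a hyperplane and $g(0)$ is small, then a solution of the transmission problem is close, in $B_1$, to a solution of the \emph{flat, constant-jump} transmission problem, whose solutions we can write down explicitly. Concretely, after a rotation we may assume $\psi(0)=0$, $\nabla\psi(0)=0$; the flat model with interface $\{x_n=0\}$ and jump condition $(v_1)_\nu-(v_2)_\nu = g(0)$ has harmonic-on-each-side solutions that are themselves affine on each side plus the one explicit radial-type correction coming from the constant jump — in fact in the flat constant case the unique bounded-mean solution is globally affine on each half-space with a prescribed mismatch in normal derivatives. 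This gives the ``zeroth order'' polynomials $P_0, Q_0$.

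The core of the argument is an improvement-of-flatness step carried out by dyadic rescaling. I would set up the following: assume by normalization $\|\psi\|_{C^{1,\alpha}(B_1')}\le \delta$ and $[g]_{C^\alpha(0)}+\|g\|_{L^\infty}\le 1$, and that $u$ oscillates by at most $1$ on $B_1$. Claim: there exist affine $P,Q$ with controlled coefficients and a fixed $\theta\in(0,1/2)$ such that $\sup_{\Omega_i\cap B_\theta}|u - (P\text{ or }Q)|\le \theta^{1+\alpha}$, and moreover the rescaled function $\tilde u(x) = \theta^{-(1+\alpha)}\big(u(\theta x) - (\text{affine part})\big)$ solves a transmission problem of the same type on $B_1$, with interface $\tilde\Gamma$ whose $C^{1,\alpha}$ norm is \emph{no larger} (by the scaling of $C^{1,\alpha}$ seminorms and the fact that $\theta^{-(1+\alpha)}\cdot\theta^{1+\alpha}=1$ exactly balances the rescaled jump datum $\theta^{1-\alpha}g$... here one checks the normal-derivative jump rescales by $\theta^{\alpha}$ relative to the size $\theta^{1+\alpha}$, giving room). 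Iterating this step at all dyadic scales $\theta^k$ produces Cauchy sequences of affine functions $P_k\to P$, $Q_k\to Q$ with geometric control, and summing the geometric series yields $|u_i(x)-P(x)|\le D|x|^{1+\alpha}$ with $D$ bounded by the stated quantity.

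The proof of the improvement-of-flatness claim is where the \emph{mean value property} enters and is the main obstacle. The difficulty is that $u$ is not harmonic across $\Gamma$ — it satisfies $\Delta u = g\,dH^{n-1}|_\Gamma$ — so classical interior estimates do not apply on balls crossing the interface. The strategy I would use is: represent $u$ via the Newtonian potential of the measure $g\,dH^{n-1}|_\Gamma$ plus a function harmonic in all of $B_1$; the potential part is explicitly controlled (its $C^{1,\alpha}$ behavior at $0$ comes from $g\in C^{0,\alpha}(0)$ together with the $C^{1,\alpha}$-graph structure of $\Gamma$, using that the single-layer potential over a $C^{1,\alpha}$ graph with $C^{0,\alpha}$ density is $C^{1,\alpha}$ up to each side — this is the technical heart and presumably where the ``fine geometric argument'' does its work), and the harmonic part is handled by the mean value property / standard boundary estimates. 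To run the compactness version instead: suppose the claim fails; take a sequence $u_k$, $\psi_k$, $g_k$ violating it; by the Log-Lipschitz a priori bound from Theorem 1.3 (and uniform estimates on the potential part) extract a locally uniformly convergent subsequence $u_k\to u_\infty$; show $\psi_k\to$ a hyperplane in $C^1$ and the limiting jump is the constant $g_\infty(0)\ge 0$, so $u_\infty$ solves the flat constant-jump problem and is affine on each side; then the affine approximation of $u_\infty$ contradicts, for large $k$, the assumed failure for $u_k$. Throughout, the sign condition $g\ge 0$ is used only to guarantee the limiting model problem is the genuinely elliptic transmission problem (so the Log-Lipschitz bound and uniqueness from Section~\ref{Section:existence} apply); the quantitative constant $C_0(n,\alpha)$ is tracked through the geometric series in the iteration.
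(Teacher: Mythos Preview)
Your dyadic iteration skeleton is right, and the paper does exactly that: it builds polynomials $P_k,Q_k$ at scales $\lambda^k$ and shows the increments are summable. But the two ingredients you supply for the improvement-of-flatness step are not what the paper does, and one of them has a genuine gap.

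\medskip

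\textbf{What the paper actually does.} The stability step is neither potential-theoretic nor a compactness/contradiction argument. It is constructive: for the first step one builds explicit flat-interface solutions $\underline w,\bar w$ on $T_{\pm\theta\varepsilon}$ with constant jump, takes solid averages $u_\varepsilon$, $\underline w_{M\varepsilon}$, and uses a purely geometric inclusion lemma for the projections of $\Gamma\cap B_\varepsilon(x)$ versus $T_{-\theta\varepsilon}\cap B_{M\varepsilon}(x)$ to compare $\Delta u_\varepsilon$ with $\Delta \underline w_{M\varepsilon}$ pointwise; then the maximum principle gives $|u-v|\le C(\theta+\delta+\varepsilon^\gamma)$. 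This is where the mean value property enters, and it is quantitative from the start --- no limits are taken. Your compactness alternative could in principle be made to work (it is the Caffarelli-style route), but it is a genuinely different proof and you would lose the explicit, self-contained character the paper advertises.

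\medskip

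\textbf{The circularity.} Your first option --- ``the single-layer potential over a $C^{1,\alpha}$ graph with $C^{0,\alpha}$ density is $C^{1,\alpha}$ up to each side'' --- is precisely the statement of the theorem, rephrased via the Green representation $u(x)=\int_\Gamma G(x,y)g(y)\,dH^{n-1}$ from Theorem~\ref{thm:existence}. Invoking it here assumes the conclusion.

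\medskip

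\textbf{The gap in the iteration.} You assert that the rescaling $\tilde u(x)=\theta^{-(1+\alpha)}\big(u(\theta x)-(\text{affine part})\big)$ ``solves a transmission problem of the same type''. It does not: once you subtract $P_k\chi_{\Omega_1}+Q_k\chi_{\Omega_2}$ with $P_k\neq Q_k$, the rescaled function $w$ is \emph{discontinuous} across $\Gamma_{\lambda^k}$, with jump $\lambda^{-k\alpha}|x_n|$ on the interface. The paper has to work to close this: it replaces $w$ by an auxiliary continuous $v$ (harmonic on each side, equal to $\tfrac12(w_1+w_2)$ on $\Gamma_{\lambda^k}$), checks $\|v-w\|_{L^\infty}\le\delta_0$ via the flatness of $\Gamma$, computes $\Delta v$ distributionally and shows its density on $\Gamma_{\lambda^k}$ is $O(\delta_0)$, and then applies a \emph{second} lemma (approximation by a harmonic function when $|g|$ is small) rather than the stability lemma again. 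So the inductive step uses a different mechanism from the base case, and the structure ``same hypotheses at every scale'' that you describe does not hold as written.
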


The key tool to prove Theorem~\ref{thm:boundary} is a \emph{stability} result,
obtained via the novel geometric approach we develop, which is
based on the mean value property and the maximum principle, see Theorem~\ref{thm:stability}. 
In fact, our idea is to explicitly construct classical solutions to problems with flat interfaces that are close to $u$.
With this, we can transfer the regularity from classical solutions to $u$. Indeed, as shown in Section \ref{Section:flat},
solutions to flat problems have the expected optimal regularity up to the interface. 
More precisely, we show that if the flatness and oscillation of the interface $\Gamma$ are controlled,
then we can construct a solution for a flat interface problem, where the flat interface does not intersect $\Gamma$.
We also quantify how close solutions must be, depending only on the geometric properties of $\Gamma$
and the basic regularity of $u$. These ingredients are crucial for the first step in the proof of
Theorem~\ref{thm:boundary}, see Lemma~\ref{lem:basic}.
To close the argument, one needs to use these approximations at each scale.
Through this techniques, and similar to the case of elliptic equations \cite{Caffarelli},
we are able to find that flat solutions are asymptotically close to non-flat solutions.

Our geometric techniques developed in Section~\ref{Section:stability} are constructive and quantitative,
and provide a precise understanding of the underlying geometry of the transmission problem.
Furthermore, this paper is essentially self-contained.
We believe that the tools presented here could be used in free boundary problems,
an idea we will explore in the future.
Finally, notice that our results are also useful in terms of numerical analysis,
as our constructions give explicit rates of approximation.

The paper is organized as follows. In Section \ref{Section:existence} we prove existence, uniqueness and
global Log-Lipschitz regularity of the solution $u$ to \eqref{eq:transmissionproblem}.
Section \ref{Section:flat} deals with the case when the transmission interface is flat.
Our geometric stability result based on the mean value property is proved in Section 4. The proof of Theorems~\ref{thm:boundary} and \ref{thm:main} are given in Sections \ref{Section:boundary} and \ref{Section:proofofmain}, respectively. 
The appendix contains some basic geometric considerations about integration on Lipschitz domains.

\medskip

\noindent\textbf{Notation.} For a point $x\in\R^n$ we write $x=(x',x_n)$, where $x'\in\R^{n-1}$, $x_n\in\R$.
The gradient in the variables $x'$ is denoted by $\nabla'$, $dH^{n-1}$ is the $(n-1)$-dimensional Hausdorff measure in $\R^n$
and $B_r'(x')$ denotes the ball in $\R^{n-1}$
of radius $r>0$ centered at $x'$. When the ball is centered at the origin $x'=0'$ or $x=0=(0',0)$, we will just write $B_r'$ or $B_r$.

\section{Existence, uniqueness and global Log-Lipschitz regularity}\label{Section:existence}

As we mentioned in the Introduction, the notion of solution to \eqref{eq:transmissionproblem} comes from computing $\Delta u$
in the sense of distributions.

\begin{defn}[Distributional solution]
We say that $u\in C_0(\overline\Omega)$ is a distributional solution to \eqref{eq:transmissionproblem} if for any $\varphi\in C^\infty_c(\Omega)$ we have
$$\int_\Omega u\Delta\varphi\,dx=\int_\Gamma g\varphi\,dH^{n-1}.$$
In this case, we write
$$\Delta u=g\,dH^{n-1}\big|_{\Gamma}.$$
\end{defn}

Even though the definition of distributional solution makes sense for $u\in L^1_{\rm loc}(\Omega)$,
we ask $u$ to be continuous up to the boundary so that the boundary condition $u=0$ is well-defined.

Recall that a bounded function $u:\overline{\Omega}\to\R$ is in the space ${\rm LogLip}(\overline\Omega)$ if
$$[u]_{{\rm LogLip}(\overline\Omega)}=\sup_{\substack{x,y\in\Omega\\x\neq y}}\frac{|u(x)-u(y)|}{|x-y||\log|x-y||}<\infty.$$

\begin{thm}[Existence, uniqueness, and Log-Lipschitz global regularity]\label{thm:existence}
Let $\Gamma$ be a Lipschitz interface, and $g\in L^\infty(\Gamma)$.
Then the unique distributional solution $u\in C_0(\overline\Omega)$ to \eqref{eq:transmissionproblem} is given by
\begin{equation}\label{eq:uwithG}
u(x)=\int_\Gamma G(x,y)g(y)\,dH^{n-1} \qquad\hbox{for}~x\in \Omega
\end{equation}
where $G(x,y)$ is the Green's function for the Laplacian in $\Omega$. 
Furthermore, $u\in {\rm LogLip}(\overline\Omega)$ and there exists $C=C(n,\Gamma,\Omega)>0$ such that
$$\|u\|_{L^\infty(\Omega)}\ + [u]_{{\rm LogLip}(\overline{\Omega})}\leq C\|g\|_{L^\infty(\Gamma)}.$$
\end{thm}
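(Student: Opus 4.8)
The plan is to establish the three assertions — the explicit Green's function representation, existence/uniqueness, and the Log-Lipschitz bound — in that order, exploiting the fact that $\Delta u = g\,dH^{n-1}|_\Gamma$ is a nonnegative measure supported on the codimension-one set $\Gamma$.

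\textbf{Step 1: the candidate solution and verification.} First I would define $u$ by the formula \eqref{eq:uwithG}, i.e.\ $u(x)=\int_\Gamma G(x,y)g(y)\,dH^{n-1}(y)$, where $G$ is the Green's function of $-\Delta$ (or $\Delta$, up to sign) on $\Omega$ with pole at $y$, vanishing on $\partial\Omega$. Since $\Gamma$ is Lipschitz and compactly contained in $\Omega$, and $g\in L^\infty(\Gamma)$, the measure $\mu = g\,dH^{n-1}|_\Gamma$ is a finite positive Borel measure with compact support in $\Omega$. I would then verify that $u$ as defined is continuous on $\overline\Omega$ with $u=0$ on $\partial\Omega$: away from $\Gamma$ this is clear by differentiating under the integral; near $\Gamma$ one uses that the Newtonian potential of a measure on a Lipschitz $(n-1)$-surface with bounded density is continuous — the singularity of $G(x,y)\sim |x-y|^{2-n}$ (or $\log$ when $n=2$) is integrable against $dH^{n-1}$ uniformly in $x$ by a standard covering/Frostman-type estimate on $\Gamma$; near $\partial\Omega$ use the boundary behavior of $G$. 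For the distributional identity, for $\varphi\in C^\infty_c(\Omega)$ I would write $\int_\Omega u\,\Delta\varphi\,dx = \int_\Omega\!\int_\Gamma G(x,y)g(y)\,dH^{n-1}(y)\,\Delta\varphi(x)\,dx$, apply Fubini (justified by integrability of $G$ against $|\Delta\varphi|\,dx\otimes dH^{n-1}$), and use the defining property $\int_\Omega G(x,y)\Delta\varphi(x)\,dx = \varphi(y)$ for each fixed $y\in\Omega$, to arrive at $\int_\Gamma g(y)\varphi(y)\,dH^{n-1}(y)$. This shows $u$ is a distributional solution.

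\textbf{Step 2: uniqueness.} If $u,\tilde u\in C_0(\overline\Omega)$ are two distributional solutions, then $w=u-\tilde u$ satisfies $\int_\Omega w\,\Delta\varphi\,dx=0$ for all $\varphi\in C^\infty_c(\Omega)$, so $w$ is harmonic in $\Omega$ in the distributional sense, hence (Weyl's lemma) $w\in C^\infty(\Omega)$ and harmonic classically; since $w\in C_0(\overline\Omega)$ vanishes on $\partial\Omega$, the maximum principle gives $w\equiv0$. Thus the solution is unique and equals \eqref{eq:uwithG}.

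\textbf{Step 3: the Log-Lipschitz estimate.} The $L^\infty$ bound $\|u\|_{L^\infty(\Omega)}\le C\|g\|_{L^\infty(\Gamma)}$ follows from $|u(x)|\le\|g\|_{L^\infty(\Gamma)}\int_\Gamma |G(x,y)|\,dH^{n-1}(y)$ together with the uniform estimate $\sup_{x\in\Omega}\int_\Gamma |G(x,y)|\,dH^{n-1}(y)\le C(n,\Gamma,\Omega)$, which in turn reduces to $\int_\Gamma |x-y|^{2-n}\,dH^{n-1}(y)\le C$ uniformly (for $n\ge 3$; a $\log$-version for $n=2$), valid because $\Gamma$ is Lipschitz so $H^{n-1}(\Gamma\cap B_r(x))\le Cr^{n-1}$ — dyadically decomposing the annuli $\{2^{-k-1}\le|x-y|<2^{-k}\}$ around the nearest point of $\Gamma$ to $x$ gives a convergent geometric series. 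For the Log-Lipschitz seminorm, I would estimate $|u(x_1)-u(x_2)|\le\|g\|_{L^\infty}\int_\Gamma|G(x_1,y)-G(x_2,y)|\,dH^{n-1}(y)$ and split $\Gamma$ according to whether $|y-x_1|\ge 2r$ or not, where $r=|x_1-x_2|$: on the far part, the mean value inequality $|G(x_1,y)-G(x_2,y)|\le r\sup_{[x_1,x_2]}|\nabla_x G(\cdot,y)|\le Cr|x_1-y|^{1-n}$ is integrated dyadically over $\{|x_1-y|\ge 2r\}$ to produce a factor $r\cdot\log(1/r)$ (this is exactly where the extra logarithm appears: $\int_{2r}^{\mathrm{diam}}\rho^{1-n}\cdot\rho^{n-2}\,d\rho = \int_{2r}^{\mathrm{diam}}\rho^{-1}\,d\rho\sim\log(1/r)$); on the near part $\{|x_i-y|<2r\}$ one bounds each of $|G(x_i,y)|$ separately by the integrable singularity, yielding a term of order $r$ (or $r\log(1/r)$), using again the Ahlfors regularity $H^{n-1}(\Gamma\cap B_\rho)\le C\rho^{n-1}$. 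Summing the two contributions gives $|u(x_1)-u(x_2)|\le C\|g\|_{L^\infty}\,|x_1-x_2|\,|\log|x_1-x_2||$ for $|x_1-x_2|$ small, and the bound for $|x_1-x_2|$ bounded below follows from the $L^\infty$ estimate; one also needs boundary regularity of $G$ near $\partial\Omega$ (using the smoothness of $\partial\Omega$) to handle points $x_i$ close to $\partial\Omega$, but since $\Gamma$ stays at positive distance from $\partial\Omega$ this is a mild Schauder/barrier argument.

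\textbf{Main obstacle.} The routine parts are Steps 1--2; the technical heart is the sharp kernel estimate in Step 3 — controlling $\int_\Gamma|\nabla_x G(x,y)|\,dH^{n-1}(y)$ and its increments on a merely \emph{Lipschitz} surface $\Gamma$, where one cannot use curvature or normal coordinates. The key facts to isolate and prove carefully are (i) the upper Ahlfors regularity $H^{n-1}(\Gamma\cap B_\rho(x))\le C(\Gamma)\rho^{n-1}$ for all $x\in\R^n$, $\rho>0$, which for a Lipschitz graph follows from the area formula and the uniform Lipschitz bound, and (ii) the gradient estimate $|\nabla_x G(x,y)|\le C|x-y|^{1-n}$ up to the boundary, which for the global Green's function on the smooth domain $\Omega$ is standard. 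Given these, the dyadic summation producing precisely one logarithmic factor — no more, no less — is the delicate bookkeeping that makes ${\rm LogLip}$ (rather than Lipschitz or merely $C^{0,\beta}$) the correct global class; note that the logarithm cannot be removed because $u$ genuinely fails to be Lipschitz at points of $\Gamma$ where $g>0$, consistent with the one-sided $C^{1,\alpha}$ regularity asserted in Theorem~\ref{thm:main}.
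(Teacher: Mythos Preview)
Your proposal is correct and follows essentially the same route as the paper: define $u$ by the Green's function integral, verify the distributional identity by Fubini and the defining property of $G$, obtain uniqueness from Weyl's lemma plus the maximum principle, and prove the Log-Lipschitz bound by splitting $\int_\Gamma |G(x_1,y)-G(x_2,y)|\,dH^{n-1}$ into a near part (controlled by $|x-y|^{2-n}$) and a far part (controlled by $|x_1-x_2|\,|x_1-y|^{1-n}$), the latter producing the logarithm upon integration. The only cosmetic difference is that the paper carries out the surface integrals by locally parametrizing $\Gamma$ as a Lipschitz graph over $B_1'\subset\R^{n-1}$, whereas you phrase the same estimates via upper Ahlfors regularity $H^{n-1}(\Gamma\cap B_\rho)\le C\rho^{n-1}$ and a dyadic sum; these are equivalent bookkeeping devices.
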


\begin{proof}
Let $u$ be as in \eqref{eq:uwithG}.
By using a partition of unity on $\Gamma$, it is enough to assume
that $\Gamma=\psi(\R^{n-1})$ where $\psi:\R^{n-1}\to\R$ is a Lipschitz function
and that $g(y',\psi(y'))$ has compact support in $B_1'$ (see Appendix~\ref{app:special}). Then, for any $x\in\Omega$,
\begin{align*}
|u(x)| &\leq \int_\Gamma|G(x,y)|g(y)\,dH^{n-1}_y \\
&=\int_{B_1'}|G(x,(y',\psi(y')))|g(y',\psi(y'))\sqrt{1+|\nabla'\psi(y')|^2}\,dy' \\
&\leq C(n,\Gamma)\|g\|_{L^\infty(\Gamma)}\int_{B_1'}\frac{1}{|(x'-y',x_n-\psi(y'))|^{n-2}}\,dy'\\
&\leq C(n,\Gamma)\|g\|_{L^\infty(\Gamma)}\int_{B_1'}\frac{1}{|x'-y'|^{n-2}}\,dy'\\
&\leq C(n,\Gamma)\|g\|_{L^\infty(\Gamma)}.
\end{align*}
Thus the integral defining $u$ in \eqref{eq:uwithG} is absolutely convergent and $u$ is bounded.

Next, for any $\varphi\in C^\infty_c(\Omega)$, by Fubini's Theorem and the symmetry $G(x,y)=G(y,x)$,
\begin{align*}
\int_\Omega u(x)\Delta\varphi(x)\,dx &= \int_\Omega\bigg[\int_\Gamma G(x,y)g(y)\,dH^{n-1}\bigg]\Delta\varphi(x)\,dx \\
&=\int_\Gamma g(y)\int_\Omega G(y,x)\Delta_x\varphi(x)\,dx\,dH^{n-1}= \int_\Gamma g(y)\varphi(y)\,dH^{n-1}.
\end{align*}
Moreover, since $G(\bar{x},y)=0$ for $\bar{x}\in\partial\Omega$ and $y\in\Omega$, by dominated convergence we see that $u(x)$ converges to $0$
as $x\in\Omega$ converges to $\bar{x}$.

Now we show that $u\in\mathrm{LogLip}(\overline{\Omega})$. Since $u$ is harmonic in $\Omega\setminus \Gamma$,
we only need to prove the regularity of $u$ near $\Gamma$. Suppose that $x_1,x_2\in K$, where $K\subset\Omega$ is a compact set containing $\Gamma$. Let $0<d<<1$. If $|x_1-x_2|\geq d$ then
$$|u(x_1)-u(x_2)|\leq \frac{2\|u\|_{L^\infty(\Omega)}}{d}d\leq C|x_1-x_2|.$$
Assume next that $|x_1-x_2|=\delta<d$.
If $n\geq 3$ then, since $B_{2\delta}(x_1)\subset B_{4\delta}(x_2)$, by classical estimates
for the Green's function,
\begin{align*}
|u(x_1)&-u(x_2)| \leq \int_{\Gamma}|G(x_1,y)-G(x_2,y)||g(y)|\,dH^{n-1} \\
&\leq C_{n,K}\|g\|_{L^\infty(\Gamma)}\bigg[\int_{B_{2\delta}(x_1)\cap\Gamma}\frac{1}{|x_1-y|^{n-2}}\,dH^{n-1}+\int_{B_{4\delta}(x_2)\cap\Gamma}\frac{1}{|x_2-y|^{n-2}}\,dH^{n-1} \\
&\quad\qquad\qquad\qquad\quad+\int_{\Gamma\setminus(B_{2\delta}(x_1)\cap\Gamma)}\frac{|x_1-x_2|}{|x_1-y|^{n-1}}\,dH^{n-1}\bigg] \\
&\leq C_{n,K,\Gamma}\|g\|_{L^\infty(\Gamma)}\bigg[\int_{B_{2\delta}'(x_1')}\frac{1}{|x_1'-y'|^{n-2}}\,dy'+\int_{B_{4\delta}'(x_2')}\frac{1}{|x_2'-y'|^{n-2}}\,dy' \\
&\qquad\qquad\qquad\qquad\quad +|x_1-x_2|\int_{B_1'\setminus B_{2\delta}'(x_1')}\frac{1}{|x_1'-y'|^{n-1}}\,dy'\bigg] \\
&\leq C_{n,K,\Gamma}\|g\|_{L^\infty(\Gamma)}\big(|x_1-x_2|+|x_1-x_2||\log|x_1-x_2||\big).
\end{align*}
The estimate in dimension $n=2$ follows the same lines.

For uniqueness, if $u,v\in C_0(\overline{\Omega})$ are distributional solutions then
$$\int_\Omega (u-v)\Delta\varphi\,dx=0 \quad \hbox{for every}~\varphi\in C^\infty_c(\Omega).$$
Hence, $u-v\in C_0(\overline{\Omega})$ is harmonic in $\Omega$ and, as a consequence, $u\equiv v$.
\end{proof}

\begin{rem}\label{rem:gamma}
Note that if $u\in  {\rm LogLip}(\overline\Omega)$ then $u\in C^{0,\gamma}(\overline \Omega)$ for every $0<\gamma <1$
and there exists $C=C(\Omega,\gamma)>0$ such that
$$
[u]_{C^{0,\gamma}(\overline \Omega)} \leq C [u]_{{\rm LogLip}(\overline{\Omega})}.
$$
\end{rem}

\section{Flat problems}\label{Section:flat}

For the next results, we fix the following notation. For $a\in\R$ we denote
\begin{align*}
B_{r,a} & =B_r(0',a)\\
B_{r,a}^+ & = B_r(0',a) \cap \{x_n >a\}\\
B_{r,a}^- & = B_r(0',a) \cap \{x_n <a\}\\
T_{r,a} &= \{x\in B_r(0',a):x_n=a\}\\
T_a &=B_1 \cap \{x_n=a\}\\
 T_a^+&=\{x_n\geq a\}\\ 
 T_a^-&=\{x_n\leq a\}.
\end{align*}
When $a=0$, we use the simplified notation $T=T_{0}$ and $B_r^{\pm}=B_{r,0}^{\pm}$.

\begin{thm}[Flat problem]\label{thm:flat}
Let $r>0$ and $a\in \R$. Given $0<\alpha, \gamma<1$, let $g\in C^{0,\alpha}(T_{r,a})$ and $f\in C^{0,\gamma}(\partial B_{r,a})$. Then there exists a unique
solution $v\in C^\infty(B_{r,a}\setminus T_{r,a})\cap C^{0,\gamma}(\overline{B_{r,a}})$ to the flat transmission problem
$$\begin{cases}
\Delta v = g\, dH^{n-1}\big|_{T_{r,a}}&\hbox{in}~B_{r,a}\\
v=f&\hbox{on}~\partial B_{r,a}
\end{cases}$$
that satisfies the global estimate
$$
\|v\|_{C^{0,\gamma}(\overline{B_{r,a}})} \leq C\big(\|g\|_{C^{0,\alpha}(T_{r,a})} + \| f\|_{ C^{0,\gamma}(\partial B_{r,a})}\big)
$$
where $C= C(n,\alpha,\gamma,r)>0$.
Moreover, if we let $v^\pm = v \chi_{ \overline{B_{r,a}^\pm}}$, then $v^\pm\in C^{1,\alpha}(\overline{B_{r/2,a}^\pm})$ and
$$
\| v^\pm \|_{C^{1,\alpha}(\overline{B_{r/2,a}^\pm})} \leq C\big(\|g\|_{C^{0,\alpha}(T_{r,a})} + \| f\|_{ L^\infty({\partial B_{r,a}})}\big)
$$
where $C= C(n,\alpha,r)>0$.
If $g\in C^{k-1,\alpha}(T_{r,a})$, $k\geq 1$, then $v\in C^{k,\alpha}(\overline{B_{r/2,a}^\pm})$ and 
$$
 \| v^\pm \|_{C^{k,\alpha}(\overline{B_{r/2,a}^\pm})} \leq C \big(\|g\|_{C^{k-1,\alpha}(T_{r,a})}+  \| f\|_{L^\infty(\partial B_{r,a})} \big)
$$
where $C=C(n,\alpha,r,k)>0$.
\end{thm}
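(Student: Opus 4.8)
The plan is to solve the flat transmission problem by reducing it to two standard boundary value problems coupled through the interface, and then upgrade the interior regularity using the explicit structure of the single-layer potential on the hyperplane $T_{r,a}$.

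First I would establish existence and uniqueness. By subtracting the harmonic extension of $f$, we may assume $f\equiv 0$, at the cost of replacing $f$ by $\|f\|_{L^\infty(\partial B_{r,a})}$ in the estimates. The distributional formulation $\Delta v = g\,dH^{n-1}|_{T_{r,a}}$ with zero boundary data is exactly the setting of Theorem~\ref{thm:existence} applied to the domain $B_{r,a}$ with the (smooth, hence Lipschitz) interface $T_{r,a}$; this immediately gives the unique solution $v\in C_0(\overline{B_{r,a}})$, represented via the Green's function of $B_{r,a}$, and in particular $v\in {\rm LogLip}$, so $v\in C^{0,\gamma}(\overline{B_{r,a}})$ for every $0<\gamma<1$ by Remark~\ref{rem:gamma}. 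Away from $T_{r,a}$ the function $v$ is harmonic, so $v\in C^\infty(B_{r,a}\setminus T_{r,a})$. It remains to prove the one-sided $C^{1,\alpha}$ (and higher) regularity up to $T_{r,a}$ together with the stated estimates.

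For the one-sided regularity, the key point is that the flatness of the interface lets us decouple the problem. Consider $v^+$ on $\overline{B^+_{r,a}}$: it is harmonic in $B^+_{r,a}$, equals the continuous trace $h := v|_{T_{r,a}}$ on the flat part, and equals $0$ on the spherical cap. Similarly $v^-$ is harmonic in $B^-_{r,a}$ with the same trace $h$ on $T_{r,a}$. The transmission (Neumann jump) condition then reads $(v^+)_\nu - (v^-)_\nu = -g$ on $T_{r,a}$ where $\nu = e_n$ (sign bookkeeping to be checked against the orientation convention). Now reflect: the even reflection of $v^+$ across $\{x_n=a\}$ and the even reflection of $v^-$ solve, together, $\Delta w = $ (jump term) which by the flatness collapses to the statement that $v$ itself, extended by $w$, satisfies $\Delta v = g\,dH^{n-1}|_{T_{r,a}}$ globally — i.e. $v$ is, locally near $T_{r,a}$, the single-layer (Newtonian) potential of the density $g$ restricted to the hyperplane plus a function harmonic across $T_{r,a}$. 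Concretely, I would write $v = N_g + R$ on $B_{3r/4,a}$, where
\[
N_g(x) = \int_{T_{r,a}} \Phi(x-y)\,g(y)\,dH^{n-1}(y),
\]
with $\Phi$ the fundamental solution of the Laplacian, and $R$ solves $\Delta R = 0$ in $B_{3r/4,a}$ with $R = v - N_g$ on $\partial B_{3r/4,a}$. Since $v - N_g$ is bounded on that sphere (by the $L^\infty$ bounds on $v$ and on $N_g$, the latter from the $n-2$ kernel estimate already used in the proof of Theorem~\ref{thm:existence}), interior estimates for harmonic functions give $R\in C^\infty(B_{r/2,a})$ with $\|R\|_{C^{1,\alpha}(\overline{B_{r/2,a}})}\le C(\|g\|_{C^{0,\alpha}}+\|f\|_{L^\infty})$. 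Thus all the one-sided irregularity of $v$ at $T_{r,a}$ is carried by the explicit potential $N_g$.

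The main obstacle, and the technical heart of the argument, is the one-sided $C^{1,\alpha}$ regularity of the flat single-layer potential $N_g$ when the density $g$ is only $C^{0,\alpha}$ on the hyperplane. This is a classical computation: differentiating under the integral sign, $\partial_i N_g(x) = \int_{T_{r,a}} \partial_i\Phi(x-y)\,g(y)\,dH^{n-1}(y)$ for $x\notin T_{r,a}$, and one shows that the tangential derivatives $\partial_i N_g$ ($i<n$) extend continuously and $C^{\alpha}$ up to each side, while the normal derivative $\partial_n N_g$ has the expected jump of size $g$ across $T_{r,a}$ but is otherwise $C^{\alpha}$ one-sidedly — the standard way is to subtract $g(y_0)$ from $g(y)$ near the singularity $y_0 = x'$, use the $C^{0,\alpha}$ modulus to control the resulting integral $\int |x-y|^{-(n-1)}|y-y_0|^\alpha\,dy' \lesssim \operatorname{dist}(x,T_{r,a})^{\alpha - 0}$ uniformly, and handle the far part by the smoothing of the kernel. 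This yields $\|N_g^\pm\|_{C^{1,\alpha}(\overline{B^\pm_{r/2,a}})}\le C(n,\alpha,r)\|g\|_{C^{0,\alpha}(T_{r,a})}$, and combining with the bound on $R$ gives the stated estimate. For the higher-order statement, when $g\in C^{k-1,\alpha}$ one differentiates the potential $k-1$ times, moving tangential derivatives onto $g$ via integration by parts on $T_{r,a}$ and absorbing normal derivatives using $\Delta\Phi = 0$ off the diagonal, reducing to the $k=1$ case with density in $C^{0,\alpha}$; the harmonic remainder $R$ is handled by the same interior estimates. Uniqueness in the stated class follows as in Theorem~\ref{thm:existence}: the difference of two solutions is harmonic in $B_{r,a}$, continuous up to the boundary, and vanishes on $\partial B_{r,a}$, hence is identically zero.
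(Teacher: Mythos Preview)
Your argument is correct, but the paper takes a shorter and more symmetric route. After the same reduction to $f\equiv 0$, the paper does not invoke Theorem~\ref{thm:existence} or any potential-theoretic decomposition $v=N_g+R$. Instead it exploits the flatness directly: it solves on the half-ball $B_1^+$ the mixed problem $\Delta v^+=0$, $v^+=0$ on the spherical cap, $v^+_{x_n}=g/2$ on $T$, obtains $v^-$ by even reflection $v^-(x',x_n)=v^+(x',-x_n)$, and checks that $v=v^+\chi_{\overline{B_1^+}}+v^-\chi_{\overline{B_1^-}}$ is the unique distributional solution. The one-sided $C^{1,\alpha}$ (and $C^{k,\alpha}$) estimates then follow in one line from classical regularity for the Neumann problem on a flat boundary, with no need to analyze the single-layer potential by hand. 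Your reflection paragraph was actually heading in exactly this direction before you pivoted to the $N_g+R$ decomposition; following it through would have given the paper's proof. What your approach buys is that it does not rely on the even symmetry of the construction and would adapt more readily to, say, different equations on each side of $T_{r,a}$; what the paper's approach buys is brevity and the ability to cite standard boundary Schauder estimates rather than reproving the mapping properties of the flat single-layer potential.
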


\begin{proof}
By subtracting from $v$ the harmonic function $h$ in $B_{r,a}$ that coincides with $f$ on $\partial B_{r,a}$,
it is enough to assume that $f=0$ on $\partial B_{r,a}$.
We consider only the case $k=1$, that is, $g\in C^{0,\alpha}(T_{r,a})$. When $k\geq1$ the proof is completely analogous.
Moreover, it is sufficient to prove the result for $a=0$ and $r=1$. Indeed suppose that $g$ is as in the statement, and let $\tilde{g}$ be defined on $T$, so that
$$
{g}(x',x_n)=r^{-1} \tilde g\big(r^{-1}x',r^{-1}(x_n-a)\big)
$$
whenever $x\in{T_{r,a}}$. If $\tilde{v}$ is the corresponding solution in $B_1$, then 
$$
{v} (x',x_n) = \tilde v\big(r^{-1} x', r^{-1} (x_n -a)\big)\qquad\hbox{for}~x\in\overline{B_{r,a}}
$$
is the unique solution to $\Delta v = g\, dH^{n-1}\big|_{T_{r,a}}$ such that $v=0$ on $\partial B_{r,a}$.
Moreover, we have the following control of the norms:
\begin{align*}
\| v^\pm \|_{C^{1,\alpha}(\overline{B_{r/2,a}^\pm})}
&= \|\tilde v^\pm \|_{L^\infty(\overline{B_{1/2}^\pm})} +r^{-1} \| \nabla \tilde v^\pm \|_{L^\infty(\overline{B_{1/2}^\pm})} + r^{-(1+\alpha)} [\nabla \tilde v^\pm]_{C^{0,\alpha}(\overline{B_{1/2}^\pm})} \\
&\leq  \max\{1,r^{-1},r^{-(1+\alpha)}\} \| \tilde v^\pm \|_{C^{1,\alpha}(\overline{B_{1/2}^\pm})} \\
&\leq  C \max\{1,r^{-1},r^{-(1+\alpha)}\} \|\tilde g\|_{C^{0,\alpha}(T)} \\
& \leq C  \max\{1,r^{-1},r^{-(1+\alpha)}\} \big(r \| g\|_{L^\infty(T_{r,a})} + r^{1+\alpha} [ g]_{C^{0,\alpha}(T_{r,a})}\big)\\
& \leq C \|g\|_{C^{0,\alpha}(T_{r,a})},
\end{align*}
and, similarly,
$$
\|v\|_{C^{0,\gamma}(\overline{B_{r,a}})} \leq C \|g \|_{C^{0,\alpha}(T_{r,a})},
$$
where $C>0$ is as in the statement.

Let $v^{+}$ be the solution to the mixed boundary value problem
$$\begin{cases}
\Delta  v^+ =  0 & \hbox{in}~B_1^+\\
 v^+  =  0 & \hbox{on}~\partial B_1^+\setminus {T}\\
 v^+_{x_n}  =  g/2 &\hbox{on}~T.
\end{cases}$$ 
By classical elliptic regularity, $ v^+\in C^\infty(B_1^+)\cap C^{1,\alpha}(\overline{B_{1/2}^+})$ and
$$\|  v^+ \|_{C^{1,\alpha}(\overline{B_{1/2}^+})} \leq C_0 \|  g\|_{C^{0,\alpha}(T)}$$
for some $C_0=C_0(n)>0$.
Furthermore, $v^+\in C^{0,\gamma}(\overline{B_1^+})$.
Indeed, consider the solution $w$ to
$$
\begin{cases}
\Delta w = 0 & \hbox{in}~B_2^+\\
w =0 & \hbox{on}~\partial B_2^+ \setminus T_{2,0}\\
w_{x_n} = \tilde g/2 & \hbox{on}~T_{2,0}
\end{cases}
$$
where $\tilde g=g$ on $T$ with $\|\tilde g\|_{C^{0,\alpha}(T_{2,0})} \leq \tilde{C} \|g\|_{C^{0,\alpha}(T)}$,
for some constant $\tilde{C}>0$. Then $w\in C^\infty(B_2^+)\cap C^{1,\alpha}(\overline{B_1^+})$ with
$$
\| w\|_{C^{1,\alpha}(\overline{B_1^+})} \leq C_1 \|\tilde g\|_{C^{0,\alpha}(T_{2,0})} \leq C_1\tilde{C}\|g \|_{C^{0,\alpha}(T)}
$$
where $C_1=C_1(n)$. Define $u(x)=v^+(x)-w(x)$, for $x\in\overline{B_1}$, and  
consider the even reflection extension of $u$ to $\overline{B_1}$ given by $\tilde{u}(x',x_n)=u(x',|x_n|)$.
It follows that $\tilde u$ is harmonic in $B_1$ and $\tilde u = -\tilde w$ on $\partial B_1$, where $\tilde w$ is the even reflection of $w$
to $\overline{B_1}$. 
Since $\tilde w \in\mathrm{Lip}(\overline{B_1})$, by using the Poisson kernel in $B_1$ (see \cite{Gilbarg-Trudinger}),
it can be checked that
$$
\| \tilde u \|_{C^{0,\gamma}(\overline{B_1})} \leq C \| \tilde w\|_{C^{0,\gamma}(\partial B_1)}\leq C \|g\|_{C^{0,\alpha}(T)}
$$
where $C=C(n,\alpha,\gamma)>0$.
Therefore, $v^+\in C^{0,\gamma}(\overline{B_1^+})$, with the corresponding estimate
for $\| v^+ \|_{C^{0,\gamma}(\overline{B_1^+})}$ as in the statement.
Next, the function $ v^-(x',x_n) =  v^+(x',-x_n)$ solves
$$\begin{cases}
\Delta  v^-  =  0 & \hbox{in}~B_1^-\\
 v^-  =  0 & \hbox{on}~\partial B_1^-\setminus T\\
 v^-_{x_n} = -  g/2 & \hbox{on}~T,
\end{cases}$$
and $v^- \in C^\infty(B_1^-)\cap C^{1,\alpha}(\overline{B_{1/2}^-})\cap C^{0,\gamma}(\overline{B_1^-})$.
It follows that 
$v= v^+\chi_{\overline{B_1^+}} +  v^- \chi_{\overline{{B_1^-}}}$
 is the unique distributional solution to 
$\Delta  v = g\, dH^{n-1}\big|_{T}$ such that $ v=0$ on $\partial B_1$. Furthermore, $v\in C^\infty(B_1\setminus T)\cap C^{0,\gamma}(\overline{B_1})$ and $ v^\pm \in C^{1,\alpha}(\overline{B_{1/2}^\pm})$ with
$$\|v\|_{C^{0,\gamma}(\overline{B_1})}\leq C(n,\alpha,\gamma) \|  g\|_{C^{0,\alpha}(T)}$$
and
$$\| v^\pm \|_{C^{1,\alpha}(\overline{B_{1/2}^\pm})}  \leq C(n,\alpha) \|  g\|_{C^{0,\alpha}(T)}.$$
\end{proof}

\begin{cor} \label{cor:flat1}
Given $|a|<1/4$, $c_0>0$, and $f\in C^{0,\gamma}(\partial B_1)$, with $0<\gamma<1$, there exists a unique solution $v\in C^\infty(B_1\setminus T_a)\cap C^{0,\gamma}(\overline{B_1})$  to 
$$\begin{cases}
\Delta v = c_0 \, dH^{n-1}|_{T_a}&\hbox{in}~B_1\\
v=f&\hbox{on}~\partial B_1
\end{cases}$$
such that 
$$
\|v\|_{C^{0,\gamma}(\overline{B_1})} \leq C\big(c_0 + \| f\|_{ C^{0,\gamma}(\partial B_1)}\big)\\
$$
where $C=C(n,\gamma)>0$ and, for any $k\geq1$, 
\begin{align*}
 \| v^\pm \|_{C^{k,\alpha}(\overline{B_{1/2}}\cap T_a^\pm)} &\leq C\big(c_0 + \| f\|_{ L^\infty({\partial B_1})}\big)
\end{align*}
where $C=C(n,\alpha,k)>0$. 
\end{cor}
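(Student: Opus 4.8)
The plan is to obtain both the well-posedness and the estimate from results already in hand: the Green's function representation behind Theorem~\ref{thm:existence}, and the flat interior estimate of Theorem~\ref{thm:flat}. Two minor points have to be dealt with. When $a\neq0$ the ball $B_1$ is not symmetric across $\{x_n=a\}$, so the reflection trick used to construct solutions in the proof of Theorem~\ref{thm:flat} is not available on $B_1$; and a single rescaled use of Theorem~\ref{thm:flat} controls $v^\pm$ only on a ball centered at a point of $T_a$, which need not contain $B_{1/2}$. Accordingly, I will build $v$ via the Green's function of $B_1$ and then cover $\overline{B_{1/2}}\cap T_a^\pm$ by balls adapted to the geometry.

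\emph{Step 1 (existence, uniqueness, $L^\infty$ bound).} Let $h\in C^\infty(B_1)\cap C^{0,\gamma}(\overline{B_1})$ be the harmonic extension of $f$, so $\|h\|_{L^\infty(B_1)}\le\|f\|_{L^\infty(\partial B_1)}$, and set
$$
v(x)=h(x)+c_0\int_{T_a}G(x,y)\,dH^{n-1}_y,\qquad x\in B_1,
$$
with $G$ the Green's function of $B_1$. Since $T_a$ is a bounded flat graph and $g\equiv c_0$ is bounded, arguing as in the proof of Theorem~\ref{thm:existence} shows that the integral converges absolutely, that $v=f$ on $\partial B_1$ with $\Delta v=c_0\,dH^{n-1}|_{T_a}$ distributionally, that $v\in C^\infty(B_1\setminus T_a)\cap{\rm LogLip}(\overline{B_1})\subset C^\infty(B_1\setminus T_a)\cap C^{0,\gamma}(\overline{B_1})$ by Remark~\ref{rem:gamma}, and that $\|v\|_{L^\infty(B_1)}\le\|f\|_{L^\infty(\partial B_1)}+C(n)c_0$. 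Uniqueness is immediate, since the difference of two solutions is harmonic in $B_1$ and vanishes on $\partial B_1$.

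\emph{Step 2 (local estimates).} Fix the universal radius $\rho=2/5$; as $|a|<1/4$, one has $\overline{B_\rho(z)}\subset B_1$ for every $z=(z',a)$ with $|z'|\le1/2$. On such a ball $v$ solves $\Delta v=c_0\,dH^{n-1}|_{T_a\cap B_\rho(z)}$ with boundary values $v|_{\partial B_\rho(z)}$; since a constant lies in $C^{k-1,\alpha}$ with norm $c_0$ for every $k\ge1$, translating $z'$ to $0'$ (which changes neither the Laplacian nor the interface) and applying the uniqueness and the estimate of Theorem~\ref{thm:flat} yields, for all $k\ge1$,
$$
\|v^\pm\|_{C^{k,\alpha}(\overline{B_{\rho/2}(z)}\cap T_a^\pm)}\le C(n,\alpha,k)\big(c_0+\|v\|_{L^\infty(\partial B_\rho(z))}\big)\le C(n,\alpha,k)\big(c_0+\|f\|_{L^\infty(\partial B_1)}\big)
$$
by Step~1. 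Moreover, if $x\in\overline{B_{1/2}}$ with $\dist(x,T_a)\ge\rho/4$, then $v$ is harmonic in $B_{\rho/8}(x)\subset B_1$, so classical interior estimates give $\|v\|_{C^{k,\alpha}(\overline{B_{\rho/16}(x)})}\le C(n,k)\|v\|_{L^\infty(B_1)}\le C(n,k)\big(c_0+\|f\|_{L^\infty(\partial B_1)}\big)$.

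\emph{Step 3 (covering).} Cover the compact set $\overline{B_{1/2}}\cap T_a^+$ by finitely many balls of the first type — centered at the feet $(x',a)$ of those $x$ with $\dist(x,T_a)<\rho/4$ — together with balls of the second type, the number depending only on $n$. Summing the estimates of Step~2 and controlling the H\"older seminorm of $D^kv^+$ across distinct balls in the standard way produces $\|v^+\|_{C^{k,\alpha}(\overline{B_{1/2}}\cap T_a^+)}\le C(n,\alpha,k)\big(c_0+\|f\|_{L^\infty(\partial B_1)}\big)$, and similarly for $v^-$, which is the claimed bound. The one point that really uses the hypothesis is that all radii above are bounded below by a universal constant precisely because $|a|<1/4$, so no constant degenerates; I expect the patching of the local H\"older estimates to be the only mildly delicate step, everything else reducing to Theorems~\ref{thm:existence} and~\ref{thm:flat} and to interior estimates for harmonic functions.
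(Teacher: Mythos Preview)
Your argument is correct, but the paper's route is noticeably shorter and sidesteps both technicalities you had to face. Instead of building $v$ on $B_1$ via the Green's function and then covering $\overline{B_{1/2}}\cap T_a^\pm$ by many small balls, the paper applies Theorem~\ref{thm:flat} \emph{once} on the large ball $B_{4,a}$ (which is centered on the interface, so the reflection trick is available) to produce a solution $w$ with $w=0$ on $\partial B_{4,a}$ and $\|w^\pm\|_{C^{k,\alpha}(\overline{B_{2,a}^\pm})}\le Cc_0$; since $|a|<1/4$ forces $\overline{B_{1/2}}\cap T_a^\pm\subset\overline{B_{2,a}^\pm}$, the interior estimate already covers the target set. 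The boundary data on $\partial B_1$ are then fixed by subtracting the harmonic function $h$ in $B_1$ with $h=w-f$ on $\partial B_1$, and interior estimates for $h$ finish the job. This avoids the covering/patching of H\"older seminorms in your Step~3, and it also avoids the mild issue in your Step~1 where you invoke the Log-Lipschitz estimate of Theorem~\ref{thm:existence} even though $T_a$ meets $\partial B_1$ (there $\Gamma\subset\subset\Omega$ was assumed, and the proof explicitly restricted to compact $K\subset\Omega$; the extension to all of $\overline{B_1}$ is true but needs a word about Green's function bounds near the boundary). Your approach has the merit of being a general localization scheme that would work in situations where no convenient enlarged symmetric domain is available.
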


\begin{proof} 
The global $C^{0,\gamma}$ estimate follows immediately from Theorem~\ref{thm:flat} with $g=c_0$. Hence, we only need to show the $C^{k,\alpha}$ estimate.
Fix $k\geq 1$. By Theorem~\ref{thm:flat} with $r=4$, there is a unique solution $w \in C^\infty(B_{4,a}\setminus T_{4,a})\cap C^{0,\gamma}(\overline{B_{4,a}})$ to 
 $
\Delta w = c_0 \, dH^{n-1}\big|_{T_{4,a}}
$
such that $w=0$ on $\partial B_{4,a}$. Moreover, 
$
\| w^\pm \|_{C^{k,\alpha}(\overline{B_{2,a}^\pm})} \leq C c_0,
$
for some $C=C(n,\alpha,k)>0$. Let $h$ be the harmonic function in $B_1$ such that $h=w-f$ on $\partial B_1$. Then $h\in C^\infty(B_1)\cap C^{0,\gamma}(\overline{B_1})$, and 
$$
\|h \|_{C^{k,\alpha}(\overline{B_{1/2}})}\leq C \big( \| w \|_{L^\infty(\partial B_1)}+ \| f \|_{L^\infty(\partial B_1)}\big)\leq C \big(c_0 + \| f \|_{L^\infty(\partial B_1)}\big)
$$
where $C=C(n,\alpha,k)>0$. Define $v=w-h$ on $\overline{B_1}$. Then $v$ is the unique solution to $\Delta v = g \, dH^{n-1}|_{T_a}$ with $v=f$ on $\partial B_1$. Moreover, since $\overline{B_{1/2}}\cap T_a^\pm \subset \overline{B_{2,a}^\pm}$, 
$$
\| v^\pm \|_{C^{k,\alpha}(\overline{B_{1/2}}\cap T_a^\pm)} \leq \| w^\pm \|_{C^{k,\alpha}(\overline{B_{2,a}^\pm})} + \| h \|_{C^{k,\alpha}(\overline{B_{1/2}})} \leq C \big(c_0 + \| f\|_{ L^\infty({\partial B_1})}\big).
$$
\end{proof}

\section{The stability result}\label{Section:stability}

In this section we prove our stability result, Theorem \ref{thm:stability}.
As we mentioned at the beginning, our argument is based on the mean value property
and, therefore, it is self-contained.

Fix $\vep>0$, and let $\Omega_\vep=\{x\in \Omega : d(x,\partial\Omega)>\vep\}$
and $\Gamma_\vep =\{ x\in \Omega : d(x,\Gamma)<\vep\}$.  
Consider the average
$$u_{\vep}(x) =\frac{1}{|B_\vep|} \int_{B_\vep(x)} u(y)\, dy\qquad\hbox{for }~x\in \Omega_\vep.$$

\begin{prop}[Properties of averages] \label{lem:averages}  Let $u$
be the distributional solution given in Theorem \ref{thm:existence}.  The following properties hold.
\begin{enumerate}[$(i)$]
\item If $B_\vep(x) \cap \Gamma = \varnothing$ then  $u_\vep (x) = u(x)$.
\item $u_\vep \to u$ uniformly in compact subsets of $\Omega$, as $\vep \to 0$.
\item If $g\in L^\infty(\Gamma)$ then $g_\vep\in C_c(\Gamma_\vep)$, where
$$
g_\vep(x) = \frac{1}{|B_\vep|}\int_{\Gamma\cap B_\vep(x)}g(y)\,dH^{n-1} \qquad\hbox{for }~x\in \Gamma_\vep.
$$
Moreover, $\Delta u_\vep (x) = g_\vep(x)$ for any $x\in \Omega_\vep$.
\end{enumerate}
\end{prop}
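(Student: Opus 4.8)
The plan is to recognize $u_\vep$ as the convolution $u_\vep = u * \rho_\vep$, where $\rho_\vep := |B_\vep|^{-1}\chi_{B_\vep}$ is the radial (hence even) normalized indicator of $B_\vep$, and then to read off the three properties from the corresponding features of $u$ by elementary manipulations with convolutions, the solid mean value property, and the uniform continuity of $u$ supplied by Theorem~\ref{thm:existence}. Since $u\in C_0(\overline\Omega)$, I extend it by zero to $\R^n$, so that $u_\vep$ is a well-defined continuous function; for the points $x$ that matter in $(i)$ and $(iii)$, namely those with $B_\vep(x)\subset\Omega$, this extension is irrelevant.

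Property $(i)$ is immediate from the mean value property: if $B_\vep(x)\cap\Gamma = \varnothing$, then the connected set $B_\vep(x)$ lies entirely in $\Omega_1$ or entirely in $\Omega_2$, so $u$ is harmonic in $B_\vep(x)$ and continuous on $\overline{B_\vep(x)}$, and the solid mean value property gives $u_\vep(x) = u(x)$. For $(ii)$, given $K\subset\subset\Omega$ and $\vep < \dist(K,\partial\Omega)$, for $x\in K$ one has
$$|u_\vep(x)-u(x)| \le \frac{1}{|B_\vep|}\int_{B_\vep(x)}|u(y)-u(x)|\,dy \le \omega(\vep),$$
where $\omega$ is a modulus of continuity of $u$ on a compact neighborhood of $K$ (by the Log-Lipschitz bound of Theorem~\ref{thm:existence} one may take $\omega(\vep)\le C\vep|\log\vep|$); since $\omega(\vep)\to0$, this is uniform convergence on $K$.

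The substance is $(iii)$: the identity $\Delta u_\vep = g_\vep$ expresses that $\Delta$ commutes with convolution, combined with $\Delta u = g\,dH^{n-1}\big|_\Gamma$. Fix $\varphi\in C^\infty_c(\Omega_\vep)$ and set $\phi := \rho_\vep * \varphi$; then $\phi\in C^\infty_c(\Omega)$, since $\supp\varphi + \overline{B_\vep}\subset\Omega$ --- this containment is precisely why $(iii)$ is localized to $\Omega_\vep$. Using Fubini (all integrands are bounded with compact support) and that $\rho_\vep$ is even,
$$\int u_\vep\,\Delta\varphi\,dx = \int u\,(\rho_\vep * \Delta\varphi)\,dx = \int u\,\Delta\phi\,dx = \int_\Gamma g\,\phi\,dH^{n-1},$$
the last equality by the definition of distributional solution. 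Expanding $\phi = \rho_\vep*\varphi$ and applying Fubini once more turns the right-hand side into $\int g_\vep\,\varphi\,dx$, because $\int_\Gamma g(y)\rho_\vep(z-y)\,dH^{n-1}(y) = |B_\vep|^{-1}\int_{\Gamma\cap B_\vep(z)}g\,dH^{n-1} = g_\vep(z)$. Hence $\Delta u_\vep = g_\vep$ in $\mathcal D'(\Omega_\vep)$; as $g_\vep$ is bounded (indeed $\|g_\vep\|_{L^\infty}\le C(n,\Gamma)\,\vep^{-1}\|g\|_{L^\infty(\Gamma)}$, using $H^{n-1}(\Gamma\cap B_\vep(z))\le C(n,\Gamma)\vep^{n-1}$ for Lipschitz $\Gamma$), elliptic regularity gives $u_\vep\in C^{1,\beta}_{\rm loc}(\Omega_\vep)$ for every $\beta<1$ and the identity can be read pointwise a.e. Finally, $g_\vep$ is supported in the $\vep$-neighborhood of $\supp g\subset\Gamma$, hence in $\overline{\Gamma_\vep}$, and is continuous because $\chi_{B_\vep(z)}\to\chi_{B_\vep(z_0)}$ $H^{n-1}$-a.e.\ on $\Gamma$ as $z\to z_0$, so dominated convergence against the finite measure $g\,dH^{n-1}\big|_\Gamma$ applies.

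The one point needing care --- and the step I would scrutinize --- is the qualitative description of $g_\vep$ in $(iii)$: one must check that $\Gamma$ carries no $H^{n-1}$-mass on the sphere $\partial B_\vep(z_0)$ (true for all $\vep$ outside an at most countable exceptional set of radii, and for all $\vep$ when $\Gamma$ has no spherical cap), which is what forces continuity of $g_\vep$; and that $H^{n-1}(\Gamma\cap B_\vep(z))\to0$ as $\dist(z,\Gamma)\to\vep^-$ (valid once $\vep$ is below the curvature scale of $\Gamma$), so that $g_\vep$ vanishes on $\partial\Gamma_\vep$ and extends by zero to a compactly supported continuous function. Everything else reduces to routine exchanges of integration; the conceptual content is just $\Delta(u*\rho_\vep) = (\Delta u)*\rho_\vep$, the mean value property being precisely what makes $\rho_\vep$ the right kernel --- convolving with it reproduces $u$ off $\Gamma$ and replaces the singular source $g\,dH^{n-1}\big|_\Gamma$ by the genuine function $g_\vep$.
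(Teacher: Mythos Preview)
Your proof is correct and takes essentially the same approach as the paper: parts $(i)$ and $(ii)$ are identical to the paper's arguments, and your convolution framing of $(iii)$ is the same Fubini computation the paper carries out explicitly, repackaged by setting $\phi=\rho_\vep*\varphi$ rather than averaging the translated test functions $\varphi(\cdot-y)$ over $y\in B_\vep$. Your discussion of the continuity and compact support of $g_\vep$ is in fact more careful than the paper's one-line appeal to dominated convergence.
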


\begin{proof}
Since $u$ is harmonic outside of $\Gamma$, $(i)$ is immediate by the mean value property.

For $(ii)$, recall by Remark \ref{rem:gamma} that $u\in C^{0,\gamma}(\overline\Omega)$. Therefore,
$$
|u_\vep(x)-u(x)| \leq \frac{1}{|B_\vep|} \int_{B_\vep(x)} |u(y)-u(x)|\, dy \leq C\|g\|_{L^\infty(\Gamma)} \vep^\gamma \to 0
$$ 
as $\vep \to 0$.

We now show $(iii)$. If $g\in L^\infty(\Gamma)$, by dominated convergence,
$g_\vep\in C_c(\Gamma_\vep)$. Moreover, for any $\varphi\in C^\infty_c(\Omega)$, we have
\begin{align*}
(\Delta u_\vep)(\varphi) &= \int_{{\Omega}}u_\vep(x)\Delta\varphi(x)\,dx \\
&=\frac{1}{|B_\vep|}\int_{B_\vep}\int_{\Omega}u(x+y)\Delta\varphi(x)\,dx\,dy \\
&=\frac{1}{|B_\vep|}\int_{B_\vep}\int_{\Omega}u(z)\Delta\varphi(z-y)\,dz\,dy \\
&=\frac{1}{|B_\vep|}\int_{B_\vep}\int_{\Gamma}g(z)\varphi(z-y)\,dH^{n-1}_z\,dy \\
&=\frac{1}{|B_\vep|}\int_{\Gamma}\bigg[\int_{B_\vep}\varphi(z-y)\,dy\bigg]\,g(z)\,dH^{n-1}_z \\
&=\frac{1}{|B_\vep|}\int_{\Gamma}\bigg[\int_{\Omega}\chi_{B_{\vep}}(z-y)\varphi(y)\,dy\bigg]\,g(z)\,dH^{n-1}_z \\
&=\frac{1}{|B_\vep|}\int_{\Omega}\int_{\Gamma}\chi_{B_{\vep}}(z-y)\,g(z)\,dH^{n-1}_z\,\varphi(y)\,dy \\
&=\int_{\Omega}\bigg[\frac{1}{|B_\vep|}\int_{\Gamma\cap B_\vep(y)}g(z)\,dH^{n-1}_z\bigg]\,\varphi(y)\,dy=\int_{\Omega}g_\vep(y)\varphi(y)\,dy.
\end{align*}
\end{proof}

\begin{thm}[Stability]\label{thm:stability}
Let $0<\vep,\theta<1/2$ and $0<\delta,\gamma<1$ be given, and 
let $\Gamma=\{(y',\psi(y')):y'\in B_1'\}$, where $\psi$ is a Lipschitz function.
Assume that $\Gamma$ is $\theta\vep$-flat in $B_1$ in the sense that
$$\Gamma\subset\{x\in B_1:|x_n|<\theta\vep\}$$
and that $\Gamma$ is also {$\vep$-horizontal} in $B_1$, that is,
$$
1-\vep \leq \nu(x) \cdot (0',1) = \big(1+|\nabla'\psi(x')|^2\big)^{-1/2} \leq 1
$$
for every $x\in \Gamma$, where $\nu(x)$ denotes the upward pointing normal on $\Gamma$.
Then there exists $C=C(n,\gamma)>0$ such that for any $u\in C(\overline{B_1})$ and $g\in L^\infty(\Gamma)$ satisfying
$$\begin{cases}
\Delta u=g\,dH^{n-1}\big|_{\Gamma}&\hbox{in}~B_1\\
|u|\leq 1&\hbox{in}~B_1\\
|g-1|\leq\delta&\hbox{on}~\Gamma
\end{cases}$$
the classical solution $v\in C^\infty(B_{3/4} \setminus T_{-\theta\vep})\cap C^{0,\gamma}(\overline{B_{3/4}})$ to the flat problem
$$\begin{cases}
\Delta v= dH^{n-1}\big|_{T_{-\theta\vep}}&\hbox{in}~B_{3/4}\\
v=u&\hbox{on}~\partial B_{3/4}
\end{cases}$$
satisfies
\begin{equation}\label{eq:estimate}
|u-v|\leq C(\theta+\delta+\vep^\gamma)\qquad\hbox{in}~B_{1/2}.
\end{equation}
\end{thm}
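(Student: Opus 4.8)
The plan is to compare $u$ with $v$ through their solid averages, exploiting that averaging over balls (the mean value property) turns both singular surface measures into \emph{bounded} densities supported in a thin horizontal slab, whose discrepancy can then be controlled robustly; the maximum principle finally transfers the bound back to $u-v$. Concretely, first I would normalize: letting $p$ be harmonic in $B_1$ with $p=u$ on $\partial B_1$ and replacing $u$ by $u-p$ and $v$ by $v-p$ (which changes neither $u-v$ nor the two equations, and makes $u=v=0$ on $\partial B_1$), Theorem~\ref{thm:existence} applied to the non-flat and to the flat transmission problem — here $\|g\|_{L^\infty(\Gamma)}\le 1+\delta\le 2$ and both interfaces are Lipschitz with a universal constant by $\vep$-horizontality — together with Remark~\ref{rem:gamma} gives $\|u\|_{C^{0,\gamma}(\overline{B_1})}+\|v\|_{C^{0,\gamma}(\overline{B_1})}\le C(n,\gamma)$. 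For $0<\vep<1/2$ take the averages $u_\vep,v_\vep$ of Proposition~\ref{lem:averages} on $B_{1-\vep}$ and let $m_\vep(x)=\frac{1}{|B_\vep|}H^{n-1}(T_{-\theta\vep}\cap B_\vep(x))$ be the average of $dH^{n-1}|_{T_{-\theta\vep}}$, defined as $g_\vep$. By Proposition~\ref{lem:averages}, $\Delta u_\vep=g_\vep$ and $\Delta v_\vep=m_\vep$ in $B_{1-\vep}$; both densities are $\le C(n)/\vep$, and since $\theta<1$ both are supported in $\{|x_n|<2\vep\}\cap B_{1-\vep}$. From the Hölder bound, $|u-u_\vep|+|v-v_\vep|\le C\vep^\gamma$ on $B_{1-\vep}$, and since $u-v$ vanishes on $\partial B_1$ with $[u-v]_{C^{0,\gamma}(\overline{B_1})}\le C$, also $|u_\vep-v_\vep|\le C\vep^\gamma$ on $\partial B_{1-\vep}$.

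The geometric heart of the matter — and the only place the hypotheses on $\Gamma$ enter — is that $g_\vep$ and $m_\vep$ carry essentially the same \emph{vertical mass}. Writing $\Gamma$ as a graph and integrating in $x_n$ by Fubini, for every $x'$ with $|x'|<1-\vep$,
\[
\int_\R g_\vep(x',t)\,dt=\frac{2}{|B_\vep|}\int_{B_\vep'(x')}g\big(z',\psi(z')\big)\sqrt{1+|\nabla'\psi(z')|^2}\,\sqrt{\vep^2-|z'-x'|^2}\,dz',
\]
while the same computation for the plane $T_{-\theta\vep}$ gives $\int_\R m_\vep(x',t)\,dt=\frac{2}{|B_\vep|}\int_{B_\vep'(x')}\sqrt{\vep^2-|z'-x'|^2}\,dz'=1$. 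Since $|g-1|\le\delta$ and $\vep$-horizontality is exactly the statement $0\le\sqrt{1+|\nabla'\psi|^2}-1\le 2\vep$ (for $\vep<1/2$), subtracting yields
\[
\rho(x'):=\int_\R (g_\vep-m_\vep)(x',t)\,dt,\qquad |\rho(x')|\le C(\delta+\vep)\ \ \text{for}\ |x'|<1-\vep.
\]

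Next I would set $\Phi_\vep=N*(g_\vep-m_\vep)$, $N$ the Newtonian kernel, so that $u_\vep-v_\vep-\Phi_\vep$ is harmonic in $B_{1-\vep}$ and, by the maximum principle, $\|u_\vep-v_\vep\|_{L^\infty(B_{1-\vep})}\le 2\|\Phi_\vep\|_{L^\infty(B_{1-\vep})}+C\vep^\gamma$. To bound $\|\Phi_\vep\|_{L^\infty(B_{1-\vep})}$, split the convolution at the horizontal scale $\vep$: on $\{|y'-x'|<\vep\}$ use $|g_\vep-m_\vep|\le C/\vep$ and $\int_{|z|\le C\vep}|N(z)|\,dz\le C\vep^2$ (a logarithm appears when $n=2$) to bound the contribution by $C\vep|\log\vep|$; on $\{|y'-x'|\ge\vep\}$, where $|\nabla N(x-(y',s))|\le C|y'-x'|^{1-n}$ for $|s|\le 2\vep$, Taylor-expand $N(x-(y',y_n))$ about $y_n=0$ — the first-order remainder contributes $\le \tfrac{C}{\vep}\big(\int_{-2\vep}^{2\vep}|y_n|\,dy_n\big)\big(\int_{\{\vep\le|y'-x'|,\ y'\in B_1'\}}|y'-x'|^{1-n}\,dy'\big)\le C\vep|\log\vep|$, and the zeroth-order term equals $\int N(x-(y',0))\rho(y')\,dy'$, bounded by $C(\delta+\vep)\int_{B_1'}|y'-x'|^{2-n}\,dy'\le C(\delta+\vep)$ since $|z'|^{2-n}$ is integrable in $\R^{n-1}$. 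Hence $\|\Phi_\vep\|_{L^\infty(B_{1-\vep})}\le C(\delta+\vep|\log\vep|)$, so $|u_\vep-v_\vep|\le C(\delta+\vep|\log\vep|)$ on $B_{1-\vep}$, and undoing the averaging, for $x\in B_{1/2}$,
\[
|u(x)-v(x)|\le |u-u_\vep|+|u_\vep-v_\vep|+|v_\vep-v|\le C(\delta+\vep|\log\vep|)\le C(\theta+\delta+\vep^\gamma),
\]
which is \eqref{eq:estimate}.

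The main obstacle is not this bookkeeping but the point it encodes: when $x$ lies inside, or within $O(\vep)$ of, the thin slab $\{|x_n|<\theta\vep\}$ — i.e.\ near the interface — the Green (equivalently Newtonian) kernel is almost singular on the support of the defect measure $g\,dH^{n-1}|_\Gamma-dH^{n-1}|_{T_{-\theta\vep}}$, and a direct pointwise comparison of $u(x)$ with $v(x)$ diverges. Averaging is exactly what removes this singularity, reducing the estimate to the single robust fact that the two mollified densities have the same vertical mass up to $O(\delta+\vep)$; identifying this quantity and seeing that $\vep$-horizontality is precisely the control one needs on the surface-area Jacobian is the conceptual crux. (The additive $\theta$ in the statement is slack: the argument above in fact gives the stronger bound $C(\delta+\vep^\gamma)$, and it is only a less careful treatment of the geometric step that would produce the extra $\theta$.)
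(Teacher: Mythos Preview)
Your argument is correct and takes a genuinely different route from the paper's. The paper constructs two flat \emph{barriers} $\underline w,\bar w$ with inflated and deflated constants $M^{n}(1+\delta)(1-\vep)^{-1}$ and $M^{-n}(1-\delta)$ (where $M=1+2\theta$), then compares averages at two \emph{different} radii---$u_\vep$ against $\underline w_{M\vep}$, and $u_{M\vep}$ against $\bar w_\vep$---via a purely geometric pointwise lemma (Lemma~\ref{lem:medidas}) about how the projections of $B_\vep(x)$ onto $\Gamma$ and onto $T_{-\theta\vep}$ are nested; the maximum principle then traps $u$ between $\underline w-C\vep^\gamma$ and $\bar w+C\vep^\gamma$, and the $\theta$ in \eqref{eq:estimate} comes from the barrier gap $\bar w-\underline w$. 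You work instead at a single scale and replace the pointwise comparison by the \emph{integrated} observation that the vertical masses of $g_\vep$ and $m_\vep$ agree up to $O(\delta+\vep)$, closing with a Newtonian-potential estimate. Your route is more analytic and, as you note, slightly sharper (no $\theta$ appears); the paper's is more elementary---only the maximum principle, no kernel integrals---which is exactly the self-contained geometric character emphasized in its introduction. One loose end in your write-up: the bound $\|\Phi_\vep\|_{L^\infty(B_{1-\vep})}\le C(\delta+\vep|\log\vep|)$ is not immediate for $x$ near $\partial B_{1-\vep}$, because truncating $g_\vep-m_\vep$ to $B_{1-\vep}$ spoils the vertical-mass identity on the thin annulus $\{1-O(\vep)<|y'|<1-\vep\}$; the clean fix is to use the Green's function of $B_{1-\vep}$ in place of $N$, so that $\Phi_\vep=0$ on $\partial B_{1-\vep}$ automatically and you only need $|\Phi_\vep(x)|$ for $x\in B_{1/2}$, where that annulus is at distance $\geq 1/4$ and contributes $O(\vep)$.
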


\begin{rem}\label{rem:above}
The interface for the flat problem in Theorem \ref{thm:stability}
is $T_{-\theta\vep}=B_{3/4}\cap\{x_n=-\theta \vep\}$, which lies below
$\Gamma$ in the $x_n$-direction. To approximate $u$ with the solution to a flat problem where the interface
lies above $\Gamma$ in the $x_n$-direction, it is enough to consider the classical solution $v$ to
$$\begin{cases}
\Delta v= dH^{n-1}\big|_{T_{\theta\vep}}&\hbox{in}~B_{3/4}\\
v=u&\hbox{on}~\partial B_{3/4}.
\end{cases}$$
In this case, the same conclusion as in Theorem \ref{thm:stability} holds. 
\end{rem}

Before proceeding with the proof, we need the following geometric result.

\begin{lem}\label{lem:medidas}
Let $\Gamma$ be as in Theorem \ref{thm:stability}. Define ${M=1+2\theta}$ and let
$x\in B_{3/4-M\vep}$ be such that $\dist(x,\Gamma)<\vep$. Then
\begin{equation}\label{eq:1}
\{y':(y',\psi(y'))\in B_\vep(x)\}\subset B_{((M\vep)^2-(x_n+\theta\vep)^2)^{1/2}}'(x')=\{y':(y',-\theta\vep)\in B_{M\vep}(x)\}
\end{equation}
and
\begin{equation}\label{eq:2}
\{y':(y',\psi(y'))\in B_{M\vep}(x)\}\supset B'_{(\vep^2-(x_n+\theta\vep)^2)^{1/2}}(x')=\{y':(y',-\theta\vep)\in B_\vep(x)\}.
\end{equation}
\end{lem}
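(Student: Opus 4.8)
The plan is to reduce both inclusions to elementary comparisons between the defining inequalities of the two balls, using that the graph parametrized by $\psi$ is squeezed in the slab $\{|x_n|<\theta\vep\}$ and is nearly horizontal. Throughout I write $p=(y',\psi(y'))$ for a generic point of $\Gamma$ and $q=(y',-\theta\vep)$ for the corresponding point on the flat interface $T_{-\theta\vep}$. The two equalities in \eqref{eq:1} and \eqref{eq:2} are immediate: a point $q=(y',-\theta\vep)$ lies in $B_\rho(x)$ iff $|x'-y'|^2+(x_n+\theta\vep)^2<\rho^2$, i.e.\ iff $|x'-y'|<(\rho^2-(x_n+\theta\vep)^2)^{1/2}$, which is exactly the right-hand ball; note this makes sense precisely when $|x_n+\theta\vep|<\rho$, and for $\rho=M\vep$ this holds because $x\in B_{1-M\vep}$ with $\dist(x,\Gamma)<\vep$ forces $|x_n|<\theta\vep+\vep$, hence $|x_n+\theta\vep|<2\theta\vep+\vep\le M\vep$ (using $M=1+2\theta$, so $2\theta\vep+\vep\le(1+2\theta)\vep$). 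So the real content is the two set inclusions on the left.

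For \eqref{eq:1}: suppose $p=(y',\psi(y'))\in B_\vep(x)$. I want $|x'-y'|^2+(x_n+\theta\vep)^2<(M\vep)^2$, i.e.\ $q\in B_{M\vep}(x)$. I have $|x'-y'|^2+(x_n-\psi(y'))^2<\vep^2$, so in particular $|x'-y'|<\vep$. Then
$$
|x'-y'|^2+(x_n+\theta\vep)^2 \le \vep^2 + (|x_n|+\theta\vep)^2 < \vep^2 + (2\theta\vep+\vep)^2 \le \vep^2+(M\vep)^2 ,
$$
which is not quite $\le (M\vep)^2$. The clean way is instead to bound $(x_n+\theta\vep)^2$ by noting $\psi(y')\in(-\theta\vep,\theta\vep)$, so $|x_n+\theta\vep|\le |x_n-\psi(y')|+|\psi(y')+\theta\vep|< |x_n-\psi(y')| + 2\theta\vep$; combining with $|x'-y'|^2+(x_n-\psi(y'))^2<\vep^2$ and expanding, the cross term $2\cdot 2\theta\vep\cdot|x_n-\psi(y')|$ and the $4\theta^2\vep^2$ term are absorbed into $(M\vep)^2-\vep^2=(M^2-1)\vep^2=(4\theta+4\theta^2)\vep^2$. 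I will carry out this expansion carefully, using $|x_n-\psi(y')|<\vep$ to dominate the cross term by $4\theta\vep^2$. For \eqref{eq:2}: suppose $q=(y',-\theta\vep)\in B_\vep(x)$, i.e.\ $|x'-y'|^2+(x_n+\theta\vep)^2<\vep^2$; I want $p=(y',\psi(y'))\in B_{M\vep}(x)$. Symmetrically, $|x_n-\psi(y')|\le |x_n+\theta\vep| + |\psi(y')+\theta\vep| < |x_n+\theta\vep| + 2\theta\vep$, and expanding $|x'-y'|^2+(x_n-\psi(y'))^2$ and using $|x_n+\theta\vep|<\vep$ to control the cross term, the excess over $\vep^2$ is again absorbed into $(M\vep)^2$.

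The main obstacle is purely bookkeeping: making sure the slack $(M^2-1)\vep^2$ genuinely dominates the extra terms $2\cdot(2\theta\vep)\cdot|x_n-\psi(y')| + (2\theta\vep)^2$ that appear when replacing $\psi(y')$ by $-\theta\vep$ (or vice versa). Since $|x_n-\psi(y')|<\vep$ on the relevant set, these extra terms are at most $4\theta\vep^2+4\theta^2\vep^2=(M^2-1)\vep^2$, so the inequalities hold (with the non-strict/strict signs working out because the original inclusion into $B_\vep$ is strict). I do not expect to need the $\vep$-horizontality hypothesis for this lemma — only the $\theta\vep$-flatness of $\Gamma$ and the range restriction $x\in B_{1-M\vep}$, $\dist(x,\Gamma)<\vep$, which guarantees the radicands above are nonnegative so that all the balls $B'_{(\cdot)}(x')$ are well defined.
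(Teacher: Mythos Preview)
Your argument is correct and in fact cleaner than the paper's. The paper proves \eqref{eq:1} and \eqref{eq:2} by a case analysis on the position of $x_n$ relative to the slab $\{|x_n|<\theta\vep\}$: for \eqref{eq:1} it treats the three ranges $-\theta\vep<x_n<\theta\vep$, $\theta\vep\le x_n<(1+\theta)\vep$, and $-(1+\theta)\vep<x_n\le-\theta\vep$ separately, in each case comparing the projection of $\Gamma\cap B_\vep(x)$ to a suitable horizontal slice and then checking the required radius inequality by hand; \eqref{eq:2} is handled the same way. Your approach bypasses all of this by a single triangle-inequality step, $|x_n+\theta\vep|\le |x_n-\psi(y')|+|\psi(y')+\theta\vep|<|x_n-\psi(y')|+2\theta\vep$, followed by the expansion you outline; the identity $(M^2-1)\vep^2=4\theta\vep^2+4\theta^2\vep^2$ then absorbs the cross term and the square exactly, because $|x_n-\psi(y')|<\vep$ on the relevant set. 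The symmetric version for \eqref{eq:2} works the same way. Your observation that only the $\theta\vep$-flatness (not the $\vep$-horizontality) is used is also correct, and this is true of the paper's proof as well. The paper's case split has the minor advantage of making the geometry of Figure~2 explicit, but your uniform algebraic argument is shorter and leaves nothing to check.
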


\begin{figure}[h]
\centering
\begin{tikzpicture}[scale=2]
\draw[densely dotted] (0,0) circle (1cm);
\draw[densely dotted] (0,0) circle (1.5cm);
\draw[black,fill=black] (0,0) circle (.2ex);
\draw[very thin, densely dotted] (-2,-0.6) -- (2,-0.6);
\draw[very thin, densely dotted] (-2,-0.3) -- (2,-0.3);
\draw[] (-2,-0.9) -- (2,-0.9);
\draw[densely dotted, very thin] (-1.2,-0.9) -- (-1.2,-0.6);
\draw[densely dotted, very thin] (1.2,-0.9) -- (1.2,-0.6);
\draw[densely dotted, very thin] (-0.68,-0.73) -- (-0.68,-0.6);
\draw[densely dotted, very thin] (0.9,-0.45) -- (.9,-0.6);
\draw[thick, red] (-1.2,-0.6) -- (-0.68,-0.6);
\draw[thick, red] (0.9,-0.6) -- (1.2,-0.6);
\draw[blue] plot[smooth,domain=-2:2] (\x, {0.2*sin(deg(\x))-0.6} );
\node[] at (0,-0.15) {\footnotesize $x$};
\node[] at (2.2,-0.6) {\footnotesize $0$};
\node[] at (0,0.6) {\footnotesize $B_\vep(x)$};
\node[] at (1.4,1.3) {\footnotesize $B_{M\vep}(x)$};
\node[blue] at (-2.2,-0.7) {\footnotesize $\Gamma$};
\node[] at (2.25,-0.9) {\footnotesize $-\theta\varepsilon$};
\node[] at (2.2,-0.3) {\footnotesize $\theta\varepsilon$};
\end{tikzpicture}
\caption{The red set is $\{y':(y',-\theta\vep)\in B_{M\vep}(x)\}\setminus \{y':(y',\psi(y'))\in B_\vep(x)\}$.}
\end{figure}
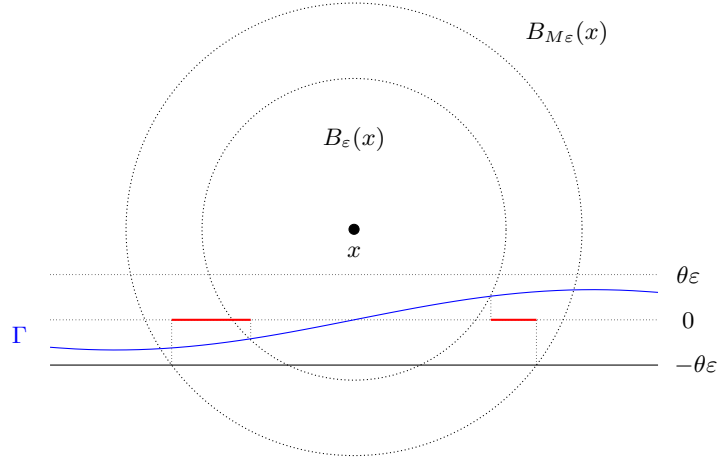

\begin{proof}
If $x$ is as in the statement then, by the flatness condition on $\Gamma$, we have $|x_n|<(1+\theta)\vep$.

Let us prove \eqref{eq:1}. Suppose first that $-\theta\vep<  x_n<\theta\vep$. Then
$$\{y':(y',\psi(y'))\in B_\vep(x)\} \subset \{y':(y',x_n)\in B_\vep(x)\}=B'_\vep(x').$$
Since
\begin{align*}
(M\vep)^2-(x_n+\theta\vep)^2 &=(1+2\theta)^2\vep^2-(x_n+\theta\vep)^2 \\
&\geq (1+4\theta+4\theta^2)\vep^2 - (2\theta\vep)^2=\vep^2+4\theta\vep^2>\vep^2
\end{align*}
we see that $B'_\vep(x')\subset B_{((M\vep)^2-(x_n+\theta\vep)^2)^{1/2}}'(x')$ and the conclusion follows. 
Assume now that $\theta\vep\leq x_n<(1+\theta)\vep$. Notice that
$$\{y':(y',\psi(y'))\in B_\vep(x)\} \subset \{y':(y',\theta\vep)\in B_\vep(x)\}=B'_{(\vep^2-(x_n-\theta\vep)^2)^{1/2}}(x').$$
Since
\begin{align*}
(M\vep)^2-&(x_n+\theta\vep)^2 - (\vep^2-(x_n-\theta\vep)^2)\\
 &= (1+2\theta)^2\vep^2 -(x_n^2 + 2\theta\vep x_n + (\theta\vep)^2)- \vep^2 + (x_n^2 -2\theta\vep x_n + (\theta\vep)^2)\\
&= 4\theta\vep^2 + 4\theta^2\vep^2 - 4\theta\vep x_n \geq 4\theta\vep^2\geq 0
\end{align*}
we find that $B'_{(\vep^2-(x_n-\theta\vep)^2)^{1/2}}(x')\subset B'_{((M\vep)^2-(x_n+\theta\vep)^2)^{1/2}}(x')$, as desired.
The last case is when $-(1+\theta)\vep < x_n \leq-\theta \vep$. Here it is clear that, since $M>1$,
\begin{align*}
\{y':(y',\psi(y'))\in B_\vep(x)\} &\subset \{y':(y',-\theta\vep)\in B_\vep(x)\} \\
&=B'_{(\vep^2-(x_n+\theta\vep)^2)^{1/2}}(x') \\
& \subset B'_{((M\vep)^2-(x_n+\theta\vep)^2)^{1/2}}(x').
\end{align*}
This concludes the proof of \eqref{eq:1}.

For \eqref{eq:2}, notice that if $x_n\geq(1-\theta)\vep$ then the inclusion follows as $\{y':(y',-\theta\vep)\in B_\vep(x)\}=\varnothing$.
We therefore assume that $-(1+\theta)\vep< x_n<(1-\theta)\vep$. If $x_n\geq-\theta\vep$ then 
\begin{align*}
\{y':(y',\psi(y'))\in B_{M\vep}(x))\} &\supset\{y':(y',-\theta\vep)\in B_{M\vep}(x)\} \\
&=B'_{((M\vep)^2-(x_n+\theta\vep)^2)^{1/2}}(x') \\
&\supset B'_{(\vep^2-(x_n+\theta\vep)^2)^{1/2}}(x') 
\end{align*}
because $M>1$. If $-(1+\theta)\vep<x_n < - \theta\vep$ then
$$\{y':(y',\psi(y'))\in B_{M\vep}(x))\}\supset\{y':(y',\theta\vep)\in B_{M\vep}(x)\}=B'_{((M\vep)^2-(x_n-\theta\vep)^2)^{1/2}}(x')$$
and
\begin{align*}
(M\vep)^2-(x_n-\theta\vep)^2& - (\vep^2-(x_n+\theta\vep)^2) \\
&= (1+2\theta)^2\vep^2 -(x_n^2 - 2\theta\vep x_n + (\theta\vep)^2)- \vep^2 + (x_n^2 +2\theta\vep x_n + (\theta\vep)^2)\\
&= 4\theta\vep^2 + 4\theta^2\vep^2 +4\theta\vep x_n \geq 0
\end{align*}
so that $B'_{((M\vep)^2-(x_n-\theta\vep)^2)^{1/2}}(x')\supset B'_{(\vep^2-(x_n+\theta\vep)^2)^{1/2}}(x')$, as desired.
Whence, \eqref{eq:2} holds.
\end{proof}

\begin{proof}[Proof of Theorem \ref{thm:stability}]
Let $M=1+2\theta>1$. By Corollary~\ref{cor:flat1} with $a=-\theta\vep$, $c_0=M^n (1+\delta)(1-\vep)^{-1}$
and $B_{3/4}$ in place of $B_1$, there is a unique classical solution $\underline{w}$ to the flat transmission problem
$$\begin{cases}
\Delta \underline{w} =M^n (1+\delta)(1-\vep)^{-1}\,dH^{n-1}\big|_{T_{-\theta\vep}}&\hbox{in}~B_{3/4}\\
\underline{w} = u&\hbox{on}~\partial B_{3/4}.
\end{cases}$$
Moreover, by subtracting the harmonic function $h$ in $B_1$ such that $h=u$
on $\partial B_1$ and applying similar arguments as in the proof of Theorem~\ref{thm:existence}, 
it can be seen that $u\in C^{0,\gamma}(\overline{B_{3/4}})$
with
$$
\|u\|_{C^{0,\gamma}(\overline{B_{3/4}})}\leq C(n,\gamma,\Gamma)(\|u\|_{L^\infty(B_1)}+\|g\|_{L^\infty(\Gamma)}) 
\leq C 
$$
where $C=C(n,\gamma)>0$ because $\Gamma$ is $\vep$-horizontal, $\|u\|_{L^\infty(B_1)}\leq 1$, and $\|g-1\|_{L^\infty(\Gamma)}\leq\delta$.
Hence, by Corollary \ref{cor:flat1} with $B_{3/4}$ in place of $B_1$, $\underline{w}\in C^\infty(B_{3/4}\setminus T_{-\theta\vep})\cap C^{0,\gamma}(\overline{B_{3/4}})$ with 
$$
\|\underline w\|_{C^{0,\gamma}(\overline{B_{3/4}})}\leq C(n,\gamma)(c_0+\|u\|_{C^{0,\gamma}(\overline{B_{3/4}})}) \leq C
$$
where $C=C(n,\gamma)>0$. 

Define the averages
$$u_\vep(x)=\frac{1}{|B_\vep|}\int_{B_\vep(x)}u(y)\,dy\qquad\hbox{for}~x\in B_{3/4-\vep}\subset B_{3/4}$$
and
$$\underline{w}_{M\vep}(x)=\frac{1}{|B_{M\vep}|}\int_{B_{M\vep}(x)}\underline{w}(y)\,dy
\qquad\hbox{for}~x\in B_{3/4-M\vep}\subset B_{3/4}.$$
By Proposition~\ref{lem:averages}$(iii)$, $\Delta u_\vep(x)=g_\vep(x)$ for every $x\in B_{3/4-\vep}$, and
$$
\Delta\underline{w}_{M\vep}(x) = \frac{1}{|B_{M\vep}|}\int_{B_{M\vep}(x)\cap T_{-\theta\vep}}M^n (1+\delta)(1-\vep)^{-1}\,dH^{n-1}
\qquad\hbox{for}~x\in B_{3/4-M\vep}.$$
In addition, notice that
$$\supp(\Delta u_\vep)\subset\{x\in B_{3/4-\vep}:\dist(x,\Gamma)<\vep\}$$
and
$$\supp(\Delta \underline{w}_{M\vep})\subset\{x\in B_{3/4-M\vep}:|x_n|<M\vep\}.$$
Since $\Gamma$ is $\theta\vep$-flat in $B_1$ and $M=1+2\theta$ it follows that
$$\supp(\Delta u_\vep)\subset\supp(\Delta \underline{w}_{M\vep}).$$
Let us first show that
$$\Delta \underline{w}_{M\vep}\geq\Delta u_\vep\qquad\hbox{in}~B_{3/4-M\vep}.$$
If $x\notin\supp(g_\vep)$ there is nothing to prove because $\Delta \underline{w}_{M\vep}\geq0$ in $B_{3/4-M\vep}$.
Let us then take $x\in B_{3/4-M\vep}$ such that $\dist(x,\Gamma)<\vep$.
Using that $0<g\leq 1+\delta$, $\Gamma$ is $\vep$-horizontal and \eqref{eq:1} in Lemma~\ref{lem:medidas}, we get
\begin{align*}
\Delta \underline{w}_{M\vep}(x) &= \frac{1}{M^n |B_{\vep}|}\int_{B_{M\vep}(x)\cap T_{-\theta\vep}} M^n (1+\delta)(1-\vep)^{-1} \, dH^{n-1}\\
&\geq \frac{1}{ |B_{\vep}|}\int_{\{y':(y',-\theta\vep)\in B_{M\vep}(x)\}}g(y',\psi(y'))\sqrt{1+|\nabla'\psi(y')|^2}\,dy' \\
&\geq \frac{1}{|B_{\vep}|}\int_{\{y':(y',\psi(y'))\in B_\vep(x)\}}g(y',\psi(y'))\sqrt{1+|\nabla'\psi(y')|^2}\,dy' \\
&=\frac{1}{|B_\vep|}\int_{B_\vep(x)\cap\Gamma}g\,dH^{n-1}=\Delta u_\vep(x).
\end{align*}
We also have
$$\underline{w}_{M\vep}\leq u_\vep+C\vep^\gamma\qquad\hbox{on}~\partial B_{3/4-M\vep}$$
for some $C=C(n,\gamma)>0$.
Indeed, fix any $x\in\partial B_{3/4-M\vep}$, and let $z\in\partial B_{3/4}$ be such that $\dist(x,\partial B_{3/4}) = |x-z|= M\vep$.
By using that $\underline{w},u\in C^{0,\gamma}(\overline{B_{3/4}})$ and $\underline{w}=u$ on $\partial B_{3/4}$,
\begin{equation}\label{eq:promedios}
\begin{aligned}
\underline{w}_{M\vep}(x)-u_\vep(x) 
&=(\underline{w}_{M\vep}(x)-\underline{w}(x))+(\underline{w}(x)-\underline{w}(z))+(u(z)-u(x))+(u(x)-u_\vep(x)) \\
&\leq\frac{1}{|B_{M\vep}|}\int_{B_{M\vep}(x)}|\underline{w}(y)-\underline{w}(x)|\,dy
+ ([\underline{w}]_{C^{0,\gamma}(\overline{B_{3/4}})} +[u]_{C^{0,\gamma}(\overline{B_{3/4}})} )|x-z|^\gamma \\
&\quad+\frac{1}{|B_{\vep}|}\int_{B_{\vep}(x)}|u(y)-u(x)|\,dy \leq C\vep^\gamma
\end{aligned}
\end{equation}
where $C=C(n,\gamma)>0$.
Hence, by the maximum principle, $\underline{w}_{M\vep}-u_{\vep}\leq C\vep^\gamma$ in  $B_{3/4-M\vep}$.
Consequently, by arguing similarly as in \eqref{eq:promedios}, it follows that, for some $C=C(n,\gamma)>0$,
\begin{equation}\label{eq:A}
\underline{w}-u\leq C\vep^\gamma\qquad\hbox{in}~B_{3/4-M\vep}.
\end{equation}

Secondly, consider the classical solution $\bar{w}$ to the flat transmission problem
$$\begin{cases}
\Delta \bar{w} = M^{-n} (1-\delta) \,dH^{n-1}\big|_{T_{-\theta\vep}}&\hbox{in}~B_{3/4}\\
\bar{w} = u&\hbox{on}~\partial B_{3/4}
\end{cases}$$
and the corresponding averages $\bar{w}_\vep$ and $u_{M\vep}$ of $\bar{w}$ and $u$, respectively.
Since $g\geq1-\delta$, by \eqref{eq:2} in Lemma \ref{lem:medidas} we find that
\begin{align*}
\Delta \bar{w}_\vep (x) &= \frac{1}{|B_\vep|} \int_{B_\vep(x)\cap T_{-\theta\vep}} M^{-n} (1-\delta) \, dH^{n-1}\\
 &\leq \frac{1}{|B_{M\vep}|} \int_{\{y':(y',-\theta\vep)\in B_\vep(x)\}} g(y',\psi(y'))\sqrt{1+|\nabla'\psi(y')|^2}\,dy' \\
 &\leq \frac{1}{|B_{M\vep}|} \int_{\{y' : (y',\psi(y'))\in B_{M\vep}(x)\}} g(y',\psi(y'))\sqrt{1+|\nabla'\psi(y')|^2}\,dy'\\
 &=  \frac{1}{|B_{M\vep}|} \int_{B_{M\vep}(x)\cap \Gamma} g\, dH^{n-1}=\Delta u_{M\vep}(x).
\end{align*}
By using parallel arguments to those in \eqref{eq:promedios} we also get that, for some $C=C(n,\gamma)>0$,
\begin{equation}\label{eq:B}
u - \bar{w} \leq C\vep^\gamma\qquad\hbox{in}~B_{3/4-M\vep}.
\end{equation}

Define
${w = \frac{\underline{w}+\bar{w}}{2}.}$
By \eqref{eq:A} and \eqref{eq:B},
$$u-w \leq \bar{w} +  C\vep^\gamma - \frac{\underline{w}+\bar{w}}{2} = \frac{\bar{w}-\underline{w}}{2} +  C\vep^\gamma$$
and
$$u-w  \leq \underline{w} -  C\vep^\gamma -  \frac{\underline{w}+\bar{w}}{2} =  \frac{\underline{w}-\bar{w}}{2} - C\vep^\gamma.$$
Hence,
$$\|u-w\|_{L^\infty(B_{1/2})} \leq \frac{1}{2}\|\bar{w}-\underline{w} \|_{L^\infty(B_{1/2})}+ C\vep^\gamma$$
where $C=C(n,\gamma)>0$. Since
$$\begin{cases}
\Delta(\bar{w}-\underline{w})=[M^{-n} (1-\delta)-M^n (1+\delta)(1-\vep)^{-1}]\,dH^{n-1}\big|_{T_{-\theta\vep}}&\hbox{in}~B_{3/4}\\
\bar{w}-\underline{w}=0&\hbox{on}~\partial B_{3/4}
\end{cases}$$
by Theorem \ref{thm:flat}, 
$$
\|\bar{w}-\underline{w}\|_{L^\infty(B_{1/2})} \leq C |M^n (1+\delta)(1-\vep)^{-1} - M^{-n} (1-\delta)|\leq C(\theta+\delta+\vep)$$
for some $C=C(n)>0$. Therefore, 
\begin{equation} \label{eq:est1}
\|u-w\|_{L^\infty(B_{1/2})}\leq C(\theta+\delta+\vep^\gamma)
\end{equation}
for some $C=C(n,\gamma)>0$. Also, $\Delta w = (1+\eta)\, dH^{n-1}\big|_{T_{-\theta\vep}}$ where
$$1+\eta = \frac{M^n (1+\delta)(1-\vep)^{-1} + M^{-n} (1-\delta)}{2}.$$
Observe that, since $0<\theta,\vep<1/2$, $0<\delta<1$, it follows that
\begin{equation}\label{eq:eta}
\begin{aligned}
|\eta| &= \frac{|M^{2n}(1+\delta) + (1-\delta)(1-\vep) -2(1-\vep)M^n|}{2(1-\vep)M^n} \\
&\leq C\big(|(1+2\theta)^{2n} +1 - 2(1+2\theta)^n| + \delta + \vep\big)\leq C(\theta+\delta+\vep)
\end{aligned}
\end{equation}
where $C=C(n)>0$.

Let $v\in C^\infty(B_{3/4} \setminus T_{-\theta\vep})\cap C^{0,\gamma}(\overline{B_{3/4}})$ be the solution to
$$\begin{cases}
{\Delta v= dH^{n-1}\big|_{T_{-\theta\vep}}}&\hbox{in}~B_{3/4}\\
v=u&\hbox{on}~\partial B_{3/4}
\end{cases}$$
see Corollary~\ref{cor:flat1}. Then $v-w$ solves
$$\begin{cases}
\Delta(v-w)=\eta\,dH^{n-1}\big|_{T_{-\theta\vep}}&\hbox{in}~B_{3/4}\\
v-w=0&\hbox{on}~\partial B_{3/4}.
\end{cases}$$
Therefore, by \eqref{eq:eta},
\begin{equation} \label{eq:est2}
\|v-w\|_{L^\infty(B_{3/4})}\leq C|\eta|\leq C(\theta+\delta+\vep)
\end{equation}
where $C=C(n)>0$. From \eqref{eq:est1} and \eqref{eq:est2} the estimate on the statement is proved.
\end{proof}

\begin{rem}[Divergence form equations]
Recall that our proof of Theorem \ref{thm:stability} is self-contained,
based on the mean value property for harmonic functions
and the maximum principle. In view of recently developed mean value formulas
for solutions to divergence form elliptic equations
by Blank--Hao \cite{Blank-Hao}, the natural question of extending our geometric techniques
to transmission problems for divergence form elliptic equations arise. For this case, our maximum
principle techniques must be replaced by energy methods. More importantly,
not much is known about the geometry of the mean value sets from \cite{Blank-Hao},
so it is not clear at all how to mimic geometric arguments such as those in Lemma \ref{lem:medidas}.
\end{rem}

\begin{rem}[Nondivergence form equations]
The second natural question would be to extend our methods to transmission problems with
nondivergence form elliptic equations, where the maximum principle is a more adequate tool.
In this situation, not only there are no useful mean value formulas available,
but also the notion of distributional solution
we consider in this paper does not apply anymore. The first step would be to prove existence, uniqueness, and
some initial regularity of viscosity solutions. This is an open problem.
\end{rem}

\section{Proof of Theorem \ref{thm:boundary}} \label{Section:boundary}

Throughout this section, $\Gamma$ is an interface in $B_1$ given by the graph of a function $x_n=\psi(x'):T\to\R$. Thus, we can write
$B_1=\Omega_1\cup\Gamma\cup\Omega_2$, where $\Omega_1=\{x=(x',x_n)\in B_1:x_n>\psi(x')\}$.
We also assume that $0\in\Gamma$.

\subsection{Preliminary lemmas}

\begin{lem}\label{lem:basic}
Let $\Gamma=\{(y',\psi(y')):y'\in B_1'\}$, where $\psi$ is a Lipschitz function.
Given $0<\alpha,\gamma<1$, there exist constants $C_0>0$, $0<\lambda<1/2$, $0<\theta, \delta,\vep<\lambda$
depending only on $n$, $\alpha$ and $\gamma$, such that for any $u\in C(\overline{B_1})$ satisfying 
$$\begin{cases}
\Delta u = g \, dH^{n-1} \big |_{\Gamma} &\hbox{in}~B_1\\
|u|\leq 1&\hbox{in}~B_1\\
|g-1|\leq \delta & \hbox{on}~\Gamma
\end{cases}$$
if $\Gamma$ is $\theta\vep$-flat and $\vep$-horizontal in $B_1$, then there are linear polynomials $P_1(x) = A\cdot x + B$ and $Q_1(x)=C\cdot x + B$,  with $A,C\in \R^n$, $B\in \R$, and $|A|+|B|+|C|\leq C_0$, such that
\begin{align*}
|u_1(x)-P_1(x)| &\leq \lambda^{1+\alpha}\qquad\hbox{for all}~x\in \Omega_1\cap B_\lambda\\
|u_2(x)-Q_1(x)|&\leq \lambda^{1+\alpha}\qquad\hbox{for all}~x\in \Omega_2\cap B_\lambda.
\end{align*}
Moreover, $\nabla' P_1=\nabla'Q_1$ and $(P_1)_{x_n} - (Q_1)_{x_n} = 1$.
\end{lem}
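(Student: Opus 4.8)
The plan is to approximate $u$ by the explicit flat solution produced by the stability result, and then to read off $P_1,Q_1$ as first order Taylor polynomials of that flat solution. Concretely, I would first apply Theorem~\ref{thm:stability} with the given $\gamma$ and with parameters $\theta,\delta,\vep<\lambda$ (to be fixed at the end), obtaining the classical solution $v\in C^\infty(B_1\setminus T_{-\theta\vep})\cap C^{0,\gamma}(\overline{B_1})$ of $\Delta v=dH^{n-1}\big|_{T_{-\theta\vep}}$, $v=u$ on $\partial B_1$, which satisfies $|u-v|\le C_*(\theta+\delta+\vep^\gamma)$ in $B_{1/2}$ with $C_*=C_*(n)$. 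The crucial observation is that the density of this flat problem is the constant $1$, so Corollary~\ref{cor:flat1} (applied with $c_0=1$, $f=u$, $k=2$, and using $\|u\|_{L^\infty(\partial B_1)}\le 1$) gives $v^\pm\in C^{2}(\overline{B_{1/2}}\cap T_{-\theta\vep}^\pm)$ with $\|v^\pm\|_{C^2}\le C_1(n,\alpha)$. This extra regularity beyond $C^{1,\alpha}$ is what makes the Taylor remainder $O(\lambda^2)$ rather than $O(\lambda^{1+\alpha})$, and hence lets the argument close.

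Next I would fix the base point $x_0=(0',-\theta\vep)\in T_{-\theta\vep}$ and set $A=\nabla v^+(x_0)$, $C=\nabla v^-(x_0)$, $B=v(x_0)-A\cdot x_0$, and then $P_1(x)=A\cdot x+B$, $Q_1(x)=C\cdot x+B$. Here $P_1$ is exactly the first order Taylor polynomial of $v^+$ at $x_0$, while the first order Taylor polynomial of $v^-$ at $x_0$ is $\widehat Q_1(x)=C\cdot x+(v(x_0)-C\cdot x_0)$. Since $v^+=v^-$ along the flat piece $T_{-\theta\vep}$ their tangential gradients agree, so $A'=C'$ (the first $n-1$ components), and the transmission condition for the flat problem forces $A_n-C_n=1$; consequently $\widehat Q_1-Q_1\equiv (A-C)\cdot x_0=(A_n-C_n)(x_0)_n=-\theta\vep$, so $|Q_1-\widehat Q_1|=\theta\vep$ everywhere, which is harmless. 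The estimates $|A|+|B|+|C|\le C_0(n,\alpha)$ are immediate from $\|v^\pm\|_{C^1}\le C_1$, and the structural identities $\nabla'P_1=\nabla'Q_1$ and $(P_1)_{x_n}-(Q_1)_{x_n}=1$ are precisely $A'=C'$ and $A_n-C_n=1$.

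For the quantitative bounds, on $\Omega_1\cap B_\lambda$ the $\theta\vep$-flatness of $\Gamma$ forces $x_n>\psi(x')>-\theta\vep$, so $v=v^+$ there, and Taylor's theorem for $v^+$ at $x_0$ with $|x-x_0|\le\lambda+\theta\vep\le 2\lambda$ gives $|u_1(x)-P_1(x)|\le C_*(\theta+\delta+\vep^\gamma)+2C_1\lambda^2$. On $\Omega_2\cap B_\lambda$, the part with $x_n\le-\theta\vep$ is handled directly ($v=v^-$, Taylor at $x_0$). On the thin slab $\{-\theta\vep<x_n<\psi(x')\}$, however, one has $v=v^+$, not $v^-$, so I would compare $v^+(x)$ with the $v^-$-model through the flat interface: writing $x^*=(x',-\theta\vep)$ and using $v^+(x^*)=v^-(x^*)$, $(v^+)_{x_n}(x^*)=(v^-)_{x_n}(x^*)+1$, together with second order Taylor expansions of both one-sided pieces, one gets $|v^+(x)-\widehat Q_1(x)|\le C_2(\lambda^2+\theta\vep)$, the $\theta\vep$ being the unavoidable contribution of the gradient jump across the $2\theta\vep$-thin slab. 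Combined with $|Q_1-\widehat Q_1|=\theta\vep$ and $\theta\vep<\lambda^2$, this yields $|u_2(x)-Q_1(x)|\le C_*(\theta+\delta+\vep^\gamma)+C_3\lambda^2$ on all of $\Omega_2\cap B_\lambda$, for some $C_3=C_3(n,\alpha)$.

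Finally I would fix the parameters in the usual order: choose $\lambda=\lambda(n,\alpha)\in(0,1/2)$ so small that $(2C_1+C_3)\lambda^{1-\alpha}\le\tfrac12$, which makes all Taylor-type terms at most $\tfrac12\lambda^{1+\alpha}$; then choose $\theta,\delta,\vep\in(0,\lambda)$, depending only on $n,\alpha,\gamma$, so small that $C_*(\theta+\delta+\vep^\gamma)\le\tfrac12\lambda^{1+\alpha}$. Adding the two halves gives the desired $|u_1-P_1|\le\lambda^{1+\alpha}$ on $\Omega_1\cap B_\lambda$ and $|u_2-Q_1|\le\lambda^{1+\alpha}$ on $\Omega_2\cap B_\lambda$. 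I expect the one genuinely delicate point to be the $\Omega_2\cap B_\lambda$ estimate inside the thin slab, where $v$ is represented by the ``wrong'' one-sided piece $v^+$: handling it cleanly requires using both the $C^2$ regularity of $v^\pm$ up to $T_{-\theta\vep}$ and the exact gradient jump there. Everything else is the standard smallness-of-$\lambda$ bookkeeping, the main idea being simply that the flat comparison solution has smooth (constant) density and is therefore $C^2$ up to its interface.
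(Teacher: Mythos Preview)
Your argument is correct, and it takes a genuinely different route from the paper's. The paper builds \emph{two} flat comparison solutions: $\underline v$ with interface $T_{-\theta\vep}$ (below $\Gamma$) and $\bar v$ with interface $T_{\theta\vep}$ (above $\Gamma$), coming from Theorem~\ref{thm:stability} and Remark~\ref{rem:above} respectively. It then sets $P_1$ to be a perturbation of the linear part of $\underline v$ at the origin and $Q_1$ a perturbation of the linear part of $\bar v$ at the origin, adding correction terms of the form $\big[\tfrac12-\underline v_{x_n}(0)+h_{x_n}(0)\big]x_n$ so that the structural relations $P_1(0)=Q_1(0)$, $\nabla'P_1=\nabla'Q_1$, $(P_1)_{x_n}-(Q_1)_{x_n}=1$ hold exactly. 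Since $\underline v$ and $\bar v$ are different functions, these identities require a reflection/symmetry argument, and the smallness of the correction term (the estimate \eqref{eq:smallerror}) needs its own proof via comparison with auxiliary harmonic functions.

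Your approach uses a single flat solution $v$ and reads both polynomials off its two one-sided pieces at the point $x_0=(0',-\theta\vep)$ on the flat interface. This makes the structural relations $\nabla'P_1=\nabla'Q_1$ and $(P_1)_{x_n}-(Q_1)_{x_n}=1$ automatic (continuity of $v$ and the exact jump condition on $T_{-\theta\vep}$), and eliminates the need for the technical estimate \eqref{eq:smallerror} and the symmetry argument. The price is the thin-slab case $\Omega_2\cap\{x_n>-\theta\vep\}$, which you correctly identify and handle: the gradient-jump contribution there is exactly $x_n+\theta\vep\le 2\theta\vep<2\lambda^2$, absorbed in the final parameter choice. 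The paper's two-sided construction sidesteps that case distinction (each $\Omega_i$ sits entirely on one side of the relevant flat interface), but at the cost of a longer argument to reconcile the two comparison solutions. Your version is shorter and more direct.
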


\begin{proof}
Fix $0<\theta, \delta,\vep<\lambda<1/2$ to be chosen later. Consider the solutions 
\begin{align*}
\underline{v} &= \underline{v}^+ \chi_{\overline{B_{3/4}}\cap T_{-\theta\vep}^+}+\underline{v}^- \chi_{\overline{B_{3/4}}\cap T_{-\theta\vep}^-} \\
\bar{v} &= \bar{v}^+ \chi_{\overline{B_{3/4}}\cap T_{\theta\vep}^+} + \bar{v}^- \chi_{\overline{B_{3/4}}\cap T_{\theta\vep}^-} 
\end{align*}
to the flat transmission problems given in Theorem~\ref{thm:stability}, and Remark~\ref{rem:above}, respectively. By Corollary~\ref{cor:flat1} with $k=2$,
$$\| \underline v^+ \|_{C^{2,\alpha}(\overline{B_{1/2}}\cap T_{-\theta\vep}^+)} 
+\| \bar v^- \|_{C^{2,\alpha}(\overline{B_{1/2}}\cap T_{\theta\vep}^-)} 
\leq C\big( 1 +  \| u\|_{L^\infty(B_1)} \big)\leq C_0$$
for some $C_0=C_0(n,\alpha)>0$. In particular,
$$
| \underline v(0)| + |\nabla \underline v(0)|  +
 | \bar v(0)|+ |\nabla \bar v(0)| \leq C_0.
$$
Let $h$ be the harmonic function in $B_{3/4}$ such that $h=u$ on $\partial B_{3/4}$.
Define 
\begin{align*}
P_1(x) &= \underline v(0) + \nabla \underline v(0) \cdot x +\big[\tfrac{1}{2}-\underline{v}_{x_n}(0)+h_{x_n}(0)\big] x_n \\ 
Q_1(x) &= \bar v(0) + \nabla \bar v(0) \cdot x + \big[-\tfrac{1}{2}-\bar{v}_{x_n}(0)+h_{x_n}(0)\big] x_n.
\end{align*}
Then $P_1$ and $Q_1$ are small perturbations of the linear parts of $\underline v$ and $\bar v$ at the origin, respectively.
To see this, first note that the functions $\underline{v}(x',x_n) - h(x',x_n)$
and $\bar{v}(x', -x_n) - h(x',- x_n)$ satisfy the same transmission problem on $T_{-\theta\vep}$ with zero data on $\partial B_{3/4}$. By uniqueness, 
$$
\underline{v}(x',x_n) - h(x',x_n) = \bar{v}(x', -x_n) - h(x',- x_n) \qquad\hbox{for all}~x\in \overline{B_{3/4}}.
$$
In particular, $\underline{v}(x',0) = \bar{v}(x', 0)$, $\nabla'\underline{v}(x',0) =  \nabla' \bar {v}(x',0)$, and thus, $P_1(0)=Q_1(0)$, and $\nabla'P_1=\nabla'\underline{v}(0)=\nabla' \bar {v}(0)=\nabla'Q_1$. Clearly, $(P_1)_{x_n} - (Q_1)_{x_n} =1$. Moreover,
$$
\underline{v}_{x_n}(x',0) -h_{x_n}(x',0) = - \bar{v}_{x_n}(x', 0)+h_{x_n}(x',0)
$$ 
and thus, $\big|\tfrac{1}{2}-\underline{v}_{x_n}(0)+h_{x_n}(0)\big| = \big|-\tfrac{1}{2}-\bar{v}_{x_n}(0)+h_{x_n}(0)\big|$.
Let us show that
\begin{equation} \label{eq:smallerror}
\big|\tfrac{1}{2}-\underline{v}_{x_n}(0)+h_{x_n}(0)\big| \leq D (\theta\vep)^\gamma
\end{equation}
 for some $D=D(n)>0$. Recall that by the construction of $\underline{v}$ in Corollary~\ref{cor:flat1}, we can write $\underline{v}=w-H$, where $w\in C^\infty(B_{4,-\theta\vep}\setminus T_{-\theta\vep})\cap C^{0,\gamma}(\overline{B_{4,-\theta\vep}})$ is the harmonic function in $B_{4,-\theta\vep}$ such that $w=0$ on $\partial B_{4,-\theta\vep}$, and $H$ is the harmonic function in $B_1$, with $H=w-u$ on $\partial B_{3/4}$.
 Then
 $$
\big|\tfrac{1}{2}-\underline{v}_{x_n}(0)+h_{x_n}(0)\big|  \leq \big|w_{x_n}(0)- \tfrac{1}{2}\big| + |(H+h)_{x_n}(0)|.
 $$
 In particular, $w_{x_n}(0)=w^+_{x_n}(0)$, where $w^+$ is the harmonic function in $B_{4,-\theta\vep}^+$ such that $w=0$ on $\partial B_{4,-\theta\vep}^+\setminus T_{-\theta\vep}$, and $w^+_{x_n}=\tfrac{1}{2}$ on $T_{-\theta\vep}$. By the mean value theorem,
 $$
 w_{x_n}(0)- \tfrac{1}{2} = w_{x_n}^+(0',0) - w_{x_n}^+(0',-\theta\vep)=  w_{x_nx_n}^+(0', \xi) \theta\vep
 $$
 for some $-\theta \vep\leq\xi\leq0$. Moreover, by Theorem~\ref{thm:flat}, $\| w^+ \|_{C^{2,\alpha}(\overline{B_{2,-\theta\vep}^+})} \leq D_0$, for some constant $D_0=D_0(n)>0$. Hence, 
 $$
 | w_{x_n}(0) - \tfrac{1}{2}|\leq D_0\theta\vep.
 $$ 
Next, note that $H+h$ is harmonic in $B_1$, and $H+h=w$ on $\partial B_{3/4}$. Consider the harmonic function $\phi$ in $B_{3/4-\theta\vep,-\theta\vep}$ such that $\phi = w$ on $B_{3/4-\theta\vep,-\theta\vep}$. Observe that $B_{3/4-\theta\vep,-\theta\vep}\subset B_{3/4}$. Since $w$ is symmetric with respect to the plane $T_{-\theta\vep}$, it follows that $\phi_{x_n}(x',-\theta\vep)=0$ for any $(x',-\theta\vep)\in B_{3/4-\theta\vep,-\theta\vep}$. Therefore, 
 $| \phi_{x_n}(0)| \leq D_0 \theta\vep$. By interior estimates, the maximum principle, and the facts that $w\in C^{0,\gamma}(\overline{B_{3/4}})$ and $\dist(\partial B_{3/4}, \partial B_{3/4-\theta\vep,-\theta\vep})\leq 2\theta \vep$,
 $$
|(H+h)_{x_n}(0)- \phi_{x_n}(0)| \leq D_1 \|(H+h) - w\|_{L^\infty(\partial B_{3/4-\theta\vep,-\theta\vep})} \leq D_1 (\theta \vep)^\gamma
 $$
 for some $D_1=D_1(n)>0$, and thus,
 $$
 |(H+h)_{x_n}(0)| \leq D_1 (\theta \vep)^\gamma+ | \phi_{x_n}(0)| \leq D_1(\theta \vep)^\gamma +D_0 \theta\vep \leq D(\theta \vep)^\gamma
 $$
 for some $D=D(n)>0$. Therefore, \eqref{eq:smallerror} holds. 

If $x\in \Omega_1\cap B_{1/2}$, by Theorem~\ref{thm:stability} and \eqref{eq:smallerror}, there are constants $C,D>0$, depending only on $n$, such that
\begin{align*}
|u_1(x) - P_1(x)| & \leq |u(x) - \underline v(x)| + |\underline v(x) - P_1(x)| \\
&\leq  |u(x) -  \underline v(x)| + |\underline v(x)-\underline v(0) - \nabla \underline v(0) | + \big|\tfrac{1}{2}-\underline{v}_{x_n}(0)+h_{x_n}(0)\big| |x_n|\\
&\leq C(\theta+\delta+\vep^\gamma) + \Vert D^2 \underline v\Vert_{L^\infty(\Omega_1\cap B_{1/2})}|x|^2+D(\theta\vep)^\gamma|x_n|\\
&\leq C(\theta+\delta+\vep^\gamma) + C_0|x|^2+ D(\theta\vep)^\gamma|x_n|
\end{align*}
Similarly, if $x\in \Omega_2 \cap B_{1/2}$,
\begin{align*}
|u_2(x) - Q_1(x)| & \leq C(\theta+\delta+\vep^\gamma) + C_0|x|^2+ D(\theta\vep)^\gamma|x_n|.
\end{align*}
First, choose $0<\lambda<1/2$ such that
$$C_0|x|^2\leq \frac{\lambda^{1+\alpha}}{2}\qquad\hbox{for all}~x\in B_\lambda.$$
Then, choose $0<\theta,\delta,\vep<\lambda$ such that
$$ C(\theta+\delta+\vep^\gamma)+ D(\theta\vep)^\gamma \lambda \leq  \frac{\lambda^{1+\alpha}}{2}.$$
\end{proof}

\begin{lem} \label{lem:harmonic}
Let $\Gamma=\{(y',\psi(y')):y'\in B_1'\}$, where $\psi$ is a Lipschitz function.
Given $0<\alpha<1$, there exist ${C}_0>0$, $0< \lambda<1/2$, and $0<\delta<1$, depending only on $n$ and $\alpha$, such that for a distributional solution ${u}\in C(\overline{B_1})$ to 
$$\begin{cases}
\Delta {u} = {g} \, dH^{n-1} \big |_{ \Gamma} &\hbox{in}~B_1\\
|u|\leq 1&\hbox{in}~B_1\\
|{g} | \leq \delta & \hbox{on}~\Gamma\cap B_{3/4}
\end{cases}$$
there is a linear polynomial ${P}(x) = {A}\cdot x + {B}$, with ${A}\in \R^n$, ${B}\in \R$ and $|{A}|+|{B}|\leq {C}_0$, such that
$$
|{u}(x)-{P}(x)|\leq \lambda^{1+\alpha} \qquad\hbox{for all}~x\in B_{\lambda}.
$$
\end{lem}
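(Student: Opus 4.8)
The plan is to reduce Lemma~\ref{lem:harmonic} to the classical $C^{1,\alpha}$ estimate for harmonic functions by showing that when the right-hand side measure is small, $u$ is uniformly close to a genuinely harmonic function. First I would compare $u$ with the harmonic replacement: let $h$ be the harmonic function in $B_1$ with $h=u$ on $\partial B_1$. Then $u-h$ is a distributional solution to $\Delta(u-h)=g\,dH^{n-1}|_\Gamma$ with zero boundary data, so by the Green's-function representation \eqref{eq:uwithG} (applied on $B_1$, exactly as in Theorem~\ref{thm:existence}, noting that $\Gamma$ is Lipschitz since it is $\vep$-horizontal) together with the bound $|g|\le\delta$, one gets $\|u-h\|_{L^\infty(B_1)}\le C(n)\,\delta$, where $C(n)$ absorbs the geometric constant coming from the integral of the Green's-function kernel over the (horizontal, hence uniformly Lipschitz) graph $\Gamma$. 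I would want to observe that this constant can be taken independent of $\psi$ precisely because the $\vep$-horizontality gives a uniform Lipschitz bound on $\Gamma$; alternatively one restricts attention to $B_{3/4}$ and uses that $\Gamma\cap B_1$ has $H^{n-1}$-measure bounded by $C(n)$.

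Next, since $\|h\|_{L^\infty(B_1)}\le\|u\|_{L^\infty(B_1)}\le 1$ and $h$ is harmonic, classical interior estimates give a linear polynomial $P(x)=A\cdot x+B$, namely the first-order Taylor polynomial of $h$ at the origin, with $|A|+|B|=|h(0)|+|\nabla h(0)|\le C_1(n)$ and
\begin{equation*}
|h(x)-P(x)|\le \|D^2 h\|_{L^\infty(B_{1/2})}\,|x|^2\le C_2(n)\,|x|^2\qquad\text{for }x\in B_{1/2}.
\end{equation*}
Combining the two estimates, for $x\in B_\lambda$ with $\lambda<1/2$,
\begin{equation*}
|u(x)-P(x)|\le |u(x)-h(x)|+|h(x)-P(x)|\le C(n)\,\delta + C_2(n)\,\lambda^{2}.
\end{equation*}
Now I would close the argument by choosing the parameters in the right order: first pick $0<\lambda<1/2$ small enough that $C_2(n)\lambda^2\le\tfrac12\lambda^{1+\alpha}$, which is possible because $2>1+\alpha$; then, with $\lambda$ fixed, pick $0<\delta<1$ small enough that $C(n)\delta\le\tfrac12\lambda^{1+\alpha}$. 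Setting $C_0=C_1(n)$ finishes the proof, since then $|A|+|B|\le C_0$ and $|u(x)-P(x)|\le\lambda^{1+\alpha}$ on $B_\lambda$.

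The only real obstacle is making sure the constant in the estimate $\|u-h\|_{L^\infty}\le C\delta$ depends only on $n$ (and not on $\Gamma$ or $\alpha$). The point is that $\vep$-horizontality forces $|\nabla'\psi|\le C(n)$ uniformly, so the surface-measure factor $\sqrt{1+|\nabla'\psi|^2}$ is bounded by a dimensional constant, and the Green's function on $B_1$ has the standard kernel bound $|G(x,y)|\le C(n)|x-y|^{2-n}$ (with a logarithm when $n=2$); integrating $|x-y|^{2-n}$ over the graph $\Gamma$ against $dH^{n-1}$ reduces, after projecting to $T$, to $\int_{B_1'}|x'-y'|^{2-n}\,dy'\le C(n)$, exactly as in the proof of Theorem~\ref{thm:existence}. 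Everything else is the routine harmonic-function estimate plus the ordered choice of $\lambda$ and $\delta$.
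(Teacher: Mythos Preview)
Your proof is essentially identical to the paper's: compare $u$ with its harmonic replacement (the paper calls it $v$), bound $\|u-v\|_{L^\infty}\le C\delta$ via the Green's-function representation from Theorem~\ref{thm:existence}, take $P$ to be the first-order Taylor polynomial of $v$ at the origin, and then choose $\lambda$ first and $\delta$ second. One small point: $\vep$-horizontality is not actually a hypothesis of this lemma (it is a hypothesis of Lemma~\ref{lem:basic}), and the paper simply asserts $C=C(n)$ without further comment---the uniform Lipschitz bound on $\Gamma$ that you are invoking is really an implicit assumption coming from the normalization carried out in the proof of Theorem~\ref{thm:boundary}.
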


\begin{proof}
Fix $\lambda,  \delta>0$ to be determined. Let ${v}$ be the harmonic function in $B_{3/4}$ such that
$ v = u$ on $\partial B_{3/4}$. Then, the difference $w= u-  v$ is the distributional solution to 
$$\begin{cases}
\Delta w = {g} \, dH^{n-1} \big |_{ \Gamma}&\hbox{in}~B_{3/4}\\
w=0&\hbox{on}~\partial B_{3/4}.
\end{cases}$$
Moreover,
$\| w\|_{L^\infty(B_{3/4})} \leq C \|  g\|_{L^\infty (\Gamma\cap B_{3/4})} \leq C  \delta$,
where $C=C(n,\Gamma)>0$. Define $ P(x) = {v}(0) + \nabla {v}(0)\cdot x$. By interior estimates and the maximum principle, we have
\begin{align*}
\| D^j  v\|_{L^\infty(B_{1/2})}& \leq  C_0 \| v\|_{L^\infty(B_{3/4})} \leq {C}_0 \qquad\hbox{for all}~j \geq 0 
\end{align*}
where $ C_0= C_0(n,j)>0$. Hence, for $x\in B_{\lambda}$, with $0<\lambda<1/2$, we get
\begin{align*}
| u(x) -  P(x) | & \leq | u(x) -  v(x)| + | v(x) -  P(x)|\\
&\leq C  \delta + \| D^2  v\|_{L^\infty(B_{1/2})} |x|^2\\
&\leq C \delta +  C_0 \lambda^2.
\end{align*}
First, choose $0<\lambda <1/2$, such that $ C_0 \lambda^2 \leq \lambda^{1+\alpha}/2$. Then choose $0<\delta <1$ such that $C\delta \leq \lambda^{1+\alpha}/2$.
\end{proof}

\subsection{Proof of Theorem~\ref{thm:boundary}} 

Fix $0<\alpha, \gamma<1$. Let $C_0, \lambda, \theta, \vep, \delta>0$ be the minimum of the constants given in Lemma~\ref{lem:basic} and Lemma~\ref{lem:harmonic}. Let $0<\delta_0<\min \big \{\delta, \theta\vep,\tfrac{\lambda^{1+\alpha}}{2}\big\}$.
First, we normalize the problem. Recall that we are assuming that $0\in\Gamma$, that is, $\psi(0')=0$.

\begin{enumerate}[$(i)$]
\item By rotation, we can assume that $\nu(0)=e_n$. In particular, $\nabla'\psi(0')=0'$.  
\item If $g(0)\neq 0$, we can suppose that $g(0)=1$. Indeed, we consider $v={u}/{g(0)}$. The case $g(0)=0$ will be addressed at the end. 
\item Assume that $\| u \|_{L^\infty(B_1)}\leq 1$, and that
$$
[g]_{C^{0,\alpha}(0)} = \sup_{x\in \Gamma\cap B_1,\,  x\neq 0} \frac{|g(x)-g(0)|}{|x|^\alpha} \leq {\delta_0}.
$$
Indeed, one can consider 
$$
v = {\delta_0}\, \frac{u}{\| u \|_{L^\infty(B_1)}+[g]_{C^{0,\alpha}(0)}}.
$$
\item Also, we let $[\psi]_{C^{1,\alpha}(0)} \leq [\psi]_{C^{1,\alpha}(B_1')}\leq \delta_0$. Recall that
$$
[\psi]_{C^{1,\alpha}(0)}  = \sup_{x'\in B_1', \, x'\neq 0'}  \frac{|\nabla' \psi(x')-\nabla'\psi(0')|}{|x'|^{\alpha}} =  \sup_{x'\in B_1', \, x'\neq 0'} \frac{|\nabla' \psi(x')|}{|x'|^{\alpha}}.
$$
Then, for this normalization one can take
$$
\phi = {\delta_0} \, \frac{\psi}{[\psi]_{C^{1,\alpha}(B_1')} }.
$$
\end{enumerate}
 
We make an abuse of notation and call the solution, the interface, the parametrization
and the right hand side as in the statement, namely, $u$, $\Gamma$, $\psi$, and $g$, respectively.

It is enough to prove the following. \medskip

\noindent{\bf Claim.} {\it For all $k \geq 1$, there exist linear polynomials $P_k=A_k \cdot x + B_k$ and $Q_k=C_k\cdot x + B_k$ such that
\begin{align*}
\lambda^{k} |A_{k+1}-A_{k}| + \lambda^{k} |C_{k+1}-C_{k}| + |B_{k+1}-B_{k}|  \leq C_0 \lambda^{k(1+\alpha)}
\end{align*}
where $C_0=C_0(n,\alpha)>0$, and such that
\begin{align*}
|u_1(x)-P_k(x)| &\leq \lambda^{k(1+\alpha)}\qquad \qquad~\hbox{for all}~x\in {\Omega_1}\cap B_{\lambda^k}\\
|u_2(x)-Q_k(x)| &\leq \lambda^{k(1+\alpha)}\qquad \qquad~\hbox{for all}~x\in {\Omega_2}\cap B_{\lambda^k}.
\end{align*}
Moreover, $\nabla'P_k=\nabla'Q_k$ and  $(P_k)_{x_n}-(Q_k)_{x_n}=1$.\medskip}

We prove the claim by induction. Let us start with the case $k=1$. By the normalization, $u$, $\Gamma$ and $g$ satisfy the assumptions on Lemma~\ref{lem:basic}. Indeed, by $(i)$ and $(iv)$, for any $(x',x_n)\in \Gamma$,
\begin{align*}
|x_n| & = |\psi(x')| = |\psi(x')-\psi(0') - \nabla'\psi(0')\cdot x'|\leq [\psi]_{C^{1,\alpha}(0)}\leq {\delta_0} \leq \theta\vep.
\end{align*}
Also, $1\leq (1+|\nabla'\psi(x')|^2)^{1/2} \leq (1+\delta_0^2)^{1/2}\leq (1-\vep)^{-1}$.
Moreover, by $(iii)$, 
$$
|g(x)-1| = |g(x) - g(0)| \leq [g]_{C^{0,\alpha}(0)}|x|^\alpha \leq \delta_0 \leq \delta \qquad\hbox{for any}~x \in \Gamma.
$$
Hence, by Lemma~\ref{lem:basic}, there are linear polynomials $P_1(x) = A_1\cdot x + B_1$, and $Q_1(x)=C_1\cdot x + B_1$,  with $A_1,C_1\in \R^n$, $B_1\in \R$, and $|A_1|+|B_1|+|C_1|\leq C_0$, such that
\begin{align*}
|u_1(x)-P_1(x)| &\leq \lambda^{1+\alpha}\qquad\hbox{for all}~x\in \Omega_1\cap B_\lambda\\
|u_2(x)-Q_1(x)|&\leq \lambda^{1+\alpha}\qquad\hbox{for all}~x\in \Omega_2\cap B_\lambda.
\end{align*}
Moreover, $\nabla' P_1=\nabla'Q_1$, and $(P_1)_{x_n} - (Q_1)_{x_n} = 1$.

For the induction step, assume that the claim holds for some $k\geq 1$, and let $P_k$ and $Q_k$ be such polynomials. Denote by 
\begin{align*}
{\Omega}_{i,\lambda^k} &=\{ x\in B_1 : \lambda^k x\in \Omega_i\} \quad \text{for }\ i=1,2\\
\Gamma_{\lambda^k} & =\{ x \in B_1 : \lambda^k  x \in \Gamma \}.
\end{align*}
Note that if  $\psi_{\lambda^k}$ is a parametrization of $\Gamma_{\lambda^k} $ in $B_1'$, then
$
\psi_{\lambda^k}(x') = \lambda^{-k}\psi(\lambda^k x').
$
In particular, $\nabla' \psi_{\lambda^k} (x') =\nabla' \psi(\lambda^k x)$, and thus, for $x\in \Gamma_{\lambda^k}$, if $\nu_{\lambda^k}(x)$ is the normal vector on $x$ pointing at $\Omega_{\lambda^k,1}$, then  $\nu_{\lambda^k}(x) = \nu(\lambda^k x)$.
Define $\mathcal{P}_k= P_k \chi_{\Omega_1} + Q_k \chi_{\Omega_2}$.
Consider the rescaled function
\begin{align}\label{eq:rescale}
w(x) = \frac{u(\lambda^k x) - \mathcal{P}_k(\lambda^k x)}{\lambda^{k(1+\alpha)}}\qquad\hbox{for}~x\in \overline{B_1}.
\end{align}
By the induction hypothesis, 
$\| w \|_{L^\infty(B_1)}  \leq 1.$
Notice that $w$ is a piecewise continuous function with a jump discontinuity on $\Gamma_{\lambda^k}$. In fact, if
$$ 
w_1 = w \big|_{\overline\Omega_{1,\lambda^k}}, \quad w_2 = w \big|_{\overline\Omega_{2,\lambda^k}}
$$
then for $x\in \Gamma_{\lambda^k}$, by the normalization $(iv)$, and the induction hypothesis, we have
\begin{align} \label{eq:jump}
|(w_1 - w_2)(x)| &= \frac{|Q_k(\lambda^k x)-P_k(\lambda^k x)|}{\lambda^{k(1+\alpha)}} = \lambda^{-k \alpha} |x_n|\\\nonumber
&\leq   \lambda^{-k \alpha} \sup_{x \in \Gamma_{\lambda^k}} |x_n|\\\nonumber
 &\leq   \sup_{x'\in B_1'} \frac{|\psi_{\lambda^k}(x')|}{ \lambda^{k \alpha} } \leq [\psi]_{C^{1,\alpha}(0)} \leq  \delta_0. 
\end{align}

Let $v=v_1\chi_{\overline{\Omega}_{1,\lambda^k}}+v_2\chi_{\overline{\Omega}_{2,\lambda^k}}$, where $v_1$ and $v_2$ are the solutions to
$$\begin{cases}
\Delta v_i=0&\hbox{in}~\Omega_{i,\lambda^k}\\
v_i=w_i&\hbox{on}~\partial\Omega_{i,\lambda^k}\setminus \Gamma_{\lambda^k}\\
v_i=\frac{w_1+w_2}{2}&\hbox{on}~\Gamma_{\lambda^k}
\end{cases}$$
for $i=1,2$. Then $v\in C(\overline{B_1})$ and, by the maximum principle, $\| v\|_{L^\infty(B_1)}\leq \| w\|_{L^\infty(B_1)}\leq 1$. Moreover,
\begin{equation}\label{eq:correction}
\begin{cases} 
\Delta (v_i-w_i)=0&\hbox{in}~\Omega_{i,\lambda^k}\\
v_i-w_i=0&\hbox{on}~\partial\Omega_{i,\lambda^k}\setminus\Gamma_{\lambda^k}\\
v_i-w_i=(-1)^{i} \frac{w_1-w_2}{2}&\hbox{on}~\Gamma_{\lambda^k}.
\end{cases}
\end{equation}
By the maximum principle and \eqref{eq:jump} it follows that
\begin{equation}\label{eq:vclosetow}
\begin{aligned} 
\|v-w\|_{L^\infty(B_1)} & \leq \|v_1-w_1\|_{L^\infty(\Omega_{1,\lambda^k})}+ 
\|v_2-w_2\|_{L^\infty(\Omega_{2,\lambda^k})} \\
&=\|w_1-w_2\|_{L^\infty(\Gamma_{\lambda^k})}
\leq  \delta_0. 
\end{aligned} 
\end{equation}

We compute the distributional Laplacian of $v$ and estimate its size. For any $\varphi\in C^\infty_c(B_1)$,
\begin{align*}
\Delta v (\varphi) & = \int_{B_1} v(x) \Delta \varphi(x) \, dx \\
&= \int_{\Omega_{1,\lambda^k}} v_1(x) \Delta \varphi(x) \, dx +  \int_{\Omega_{2,\lambda^k}} v_2(x) \Delta \varphi(x) \, dx \\
&=  \int_{\Omega_{1,\lambda^k}} (v_1-w_1)(x) \Delta \varphi(x) \, dx 
+ \int_{\Omega_{2,\lambda^k}} (v_2-w_2)(x) \Delta \varphi(x) \, dx 
+ \int_{B_1} w(x) \Delta \varphi(x) \, dx \\
&\equiv I_1 + I_2 + I_3.
\end{align*}
For $i=1,2$, by Green's formula, 
\begin{align*}
I_i &= \frac{1}{2} \int_{\Gamma_{\lambda^k}}  (w_1-w_2)(x)  \varphi_{\nu_{\lambda^k}}(x)\, dH^{n-1}+(-1)^{i+1} \int_{\Gamma_{\lambda^k}} (v_i-w_i)_{\nu_{\lambda^k}}(x) \varphi(x)\, dH^{n-1}
\end{align*}
where we recall that $\nu_{\lambda^k}$ is the unit normal vector on $\Gamma_{\lambda^k}$ pointing at $\Omega_{1,\lambda^k}$.
Note that
$$
I_3=\Delta w(\varphi) = \Delta \left(\frac{u(\lambda^k x)}{\lambda^{k(1+\alpha)}}\right)(\varphi) 
-\Delta \left(\frac{\mathcal{P}_k(\lambda^k x)}{\lambda^{k(1+\alpha)}}\right) (\varphi).
$$
Since $u$ is a distributional solution, by doing a change of variables, we get
\begin{align*}
\Delta (u(\lambda^k x))(\varphi) &= \int_{B_1} u(\lambda^k x)\Delta \varphi(x) \, dx\\
&= \lambda^{k(2-n)} \int_{B_{\lambda^{k}}} u(y) \Delta_y \varphi ( \lambda^{-k} y)\, dy\\
&= \lambda^{k(2-n)} \int_{\Gamma \cap B_{\lambda^{k}}} g(y) \varphi(\lambda^{-k} y) \, dH^{n-1}_y
= \lambda^k \int_{\Gamma_{\lambda^k}}g(\lambda^k x) \varphi(x)\, dH^{n-1}.
\end{align*}
Also, by Green's formula, the induction hypothesis and \eqref{eq:jump},
\begin{align*}
\Delta (\mathcal{P}_k(\lambda^k x))(\varphi) &= \lambda^k  \int_{\Gamma_{\lambda^k}} \big[\nabla P_k (\lambda^k x)-\nabla Q_k(\lambda^k x)\big]\cdot \nu_{\lambda^k}(x)\varphi(x) \, dH^{n-1} \\
&\qquad + \int_{\Gamma_{\lambda^k}} \big[Q_k(\lambda^k x)-P_k(\lambda^k x)\big] \varphi_{\nu_{\lambda^k}}(x) \, d H^{n-1}\\
&= \lambda^k  \int_{\Gamma_{\lambda^k}} \nu_n(\lambda^k x) \varphi(x) \, dH^{n-1} + \lambda^{k(1+\alpha)} \int_{\Gamma_{\lambda^k}} (w_1 - w_2)(x)   \varphi_{\nu_{\lambda^k}}(x) \, d H^{n-1}.
\end{align*}
Then
\begin{equation*}
I_3= \int_{\Gamma_{\lambda^k}} \tilde g(x) \varphi(x)  \, dH^{n-1} - \int_{\Gamma_{\lambda^k}} (w_1 - w_2)(x)   \varphi_{\nu_{\lambda^k}}(x) \, d H^{n-1}
\end{equation*}
where
$$
\tilde g(x) = \frac{g(\lambda^k x) - \nu_n(\lambda^k x)}{\lambda^{k\alpha}}.
$$
Therefore, for any $\varphi\in C^\infty_c(B_1)$,
\begin{align*}
\Delta v(\varphi) &= \int_{\Gamma_{\lambda^k}} \Big[  (v_1-w_1)_{\nu_{\lambda^k}}(x) 
- (v_2-w_2)_{\nu_{\lambda^k}}(x) +  \tilde{g}(x)\Big] \varphi(x)\, dH^{n-1}
\equiv\int_{ \Gamma_{\lambda^k}} \hat{g} \varphi \, dH^{n-1}.
\end{align*} 
By $C^{1,\alpha}$ boundary estimates for harmonic functions applied to \eqref{eq:correction}
and, by taking into account \eqref{eq:vclosetow} and the first line of \eqref{eq:jump}, we get
$$
\|v_i-w_i\|_{C^{1,\alpha}(\overline{\Omega_{i,\lambda^k}\cap B_{3/4}})} \leq C \|w_2-w_1\|_{C^{1,\alpha}(\Gamma_{\lambda^k})}
= C \lambda^{-k \alpha} \|\psi_{\lambda^k}\|_{C^{1,\alpha}(\overline{B_1'})}.
$$
Using the normalization of $\psi$, we find that
$$\lambda^{-k\alpha}\|\psi_{\lambda^k}\|_{L^\infty(B_1')}
=  \sup_{x'\in B_1'} \frac{|\psi(\lambda^kx')|}{\lambda^{k(1+\alpha)}}
\leq [\psi]_{C^{1,\alpha}(0)} \leq \delta_0$$
$$\lambda^{-k\alpha}\|\nabla'\psi_{\lambda^k}\|_{L^\infty(B_1')} 
=  \sup_{x'\in B_1'} \frac{|\nabla'\psi(\lambda^kx')|}{\lambda^{k\alpha}} 
\leq [\psi]_{C^{1,\alpha}(0)} \leq \delta_0$$
and
$$\lambda^{-k\alpha}[\nabla'\psi_{\lambda^k}]_{C^{0,\alpha}(\overline{B_1'})}
=  \sup_{\substack{x', y'\in \overline{B_1'}\\x'\neq y'}} \frac{|\nabla'\psi(\lambda^kx')-\nabla'\psi(\lambda^ky')|}{\lambda^{k\alpha}|x'-y'|^\alpha} 
\leq [\psi]_{C^{1,\alpha}(\overline{B_1'})} \leq \delta_0.$$
In particular, it follows that
$$
\|(v_i-w_i)_{\nu_{\lambda^k}}\|_{L^\infty(\Gamma_{\lambda^k} \cap B_{3/4})} \leq  C\delta_0. 
$$
Moreover, for $x\in \Gamma_{\lambda^k}$,
\begin{align*}
| \tilde{g}(x)| & \leq  \frac{|g(\lambda^k x)-1|}{\lambda^{k\alpha}}+ \frac{|1-\nu_n (\lambda^k x)|}{\lambda^{k\alpha}} 
\leq [g]_{C^{0,\alpha}(0)} + [\nu_n]_{C^{0,\alpha}(0)} 
\leq \delta_0 + \delta_0 =2\delta_0.
\end{align*}
Hence, choosing $\delta_0$ sufficiently small, we see that 
\begin{align*}
\|\hat{g}\|_{L^\infty(\Gamma_{\lambda^k} \cap B_{3/4})} 
&\leq \|(v_1-w_1)_{\nu_{\lambda^k}}\|_{L^\infty(\Gamma_{\lambda^k} \cap B_{3/4})}
+ \|(v_2-w_2)_{\nu_{\lambda^k}}\|_{L^\infty(\Gamma_{\lambda^k} \cap B_{3/4})}
+ \|\tilde g\|_{L^\infty(\Gamma_{\lambda^k})}\\
&\leq C\delta_0 + C\delta_0 + 2\delta_0 = 2(C+1)\delta_0 \leq \delta.
\end{align*}
We have proved that ${v}\in C(\overline{B_1})$ satisfies 
$$\begin{cases}
\Delta {v} = {\hat g} \, dH^{n-1} \big |_{ \Gamma_{\lambda^k}} &\hbox{in}~B_1\\
|v|\leq 1&\hbox{in}~B_1\\
|{\hat g} | \leq \delta & \hbox{on}~\Gamma_{\lambda^k}\cap B_{3/4}.
\end{cases}$$
Therefore, we can apply Lemma~\ref{lem:harmonic} to $ v$
to find a linear polynomial ${P}(x) = {A}\cdot x + {B}$, with ${A}\in \R^n$, ${B}\in \R$
and $|{A}|+| B|\leq C_0$, such that
\begin{equation*}
|v(x)-{P}(x)|\leq \frac{\lambda^{1+\alpha}}{2}\qquad \qquad~\hbox{for all}~x\in B_{\lambda}.
\end{equation*}

Hence, for any $x\in B_\lambda$, by the estimate above and \eqref{eq:vclosetow},
\begin{align*}
|w(x)-{P}(x)| &\leq |w(x)-v(x)| + |v(x)-{P}(x)|
\leq \delta_0 + \frac{{\lambda}^{1+\alpha}}{2}
\leq \lambda^{1+\alpha}
\end{align*}
since $\delta_0 \leq \lambda^{1+\alpha}/2.$ According to \eqref{eq:rescale}, 
$$
\left | \frac{u(\lambda^k x) - \mathcal{P}_k(\lambda^k x)}{\lambda^{k(1+\alpha)}}-{P}(x)\right|\leq \lambda^{1+\alpha}\qquad \qquad~\hbox{for all}~x\in B_{\lambda}
$$
or equivalently, for $y=\lambda^k x$,
$$
|u(y) - \mathcal{P}_k(y)-\lambda^{k(1+\alpha)} {P}(y/\lambda^k)| \leq \lambda^{(k+1)(1+\alpha)}
\quad\qquad ~\hbox{for all}~y\in B_{\lambda^{k+1}}.
$$
Define the polynomials $P_{k+1}$ and $Q_{k+1}$ as
$$
P_{k+1}(y)=P_k(y)+\lambda^{k(1+\alpha)} {P}(y/\lambda^k), \qquad 
Q_{k+1}(y)=Q_k(y)+\lambda^{k(1+\alpha)} {P}(y/\lambda^k).
$$
From the previous estimate, it follows that
\begin{align*}
|u_1(y)-P_{k+1}(y)| &\leq \lambda^{(k+1)(1+\alpha)}\qquad \qquad~\hbox{for all}~y\in {\Omega_1}\cap B_{\lambda^{k+1}}\\
|u_2(y)-Q_{k+1}(y)| &\leq \lambda^{(k+1)(1+\alpha)}\qquad \qquad~\hbox{for all}~y\in {\Omega_2}\cap B_{\lambda^{k+1}}.
\end{align*}
Moreover, since $P_k(0)=Q_k(0)$, and $\nabla'P_{k}=\nabla'Q_{k}$, it is clear that $P_{k+1}(0)=Q_{k+1}(0)$, and  $\nabla'P_{k+1}=\nabla'Q_{k+1}$. Also, $(P_{k+1})_{x_n} -(Q_{k+1})_{x_n}=(P_{k})_{x_n} -(Q_{k})_{x_n}=1.$
If $P_{k+1}(y)= A_{k+1}\cdot y + B_{k+1}$ and $Q_{k+1}(y)= C_{k+1}\cdot y + B_{k+1}$ then 
\begin{align*}
A_{k+1} = A_k+ \lambda^{k\alpha}  A, \quad B_{k+1}= B_k + \lambda^{k(1+\alpha)} B, \quad C_{k+1}=C_k + \lambda^{k\alpha}  A.
\end{align*}
By the estimate $| A|+| B|\leq C_0$, we conclude
\begin{align*}
\lambda^k |A_{k+1}-A_k| + \lambda^k |C_{k+1}-C_k| +  |B_{k+1}-B_k| \leq C_0\lambda^{k(1+\alpha)}.
\end{align*}

The proof of the claim is completed.

\medskip

Finally, we consider the case $g(0)=0$. As before, it is enough to prove the following. \medskip

\noindent{\bf Claim.} {\it For all $k \geq 1$, there exists a linear polynomial $P_k=A_k \cdot x + B_k$  such that
\begin{align*}
\lambda^{k} |A_{k+1}-A_{k}| + |B_{k+1}-B_{k}|  \leq C_0 \lambda^{k(1+\alpha)}
\end{align*}
where $C_0=C_0(n,\alpha)>0$, and such that
\begin{align*}
|u(x)-P_k(x)| &\leq \lambda^{k(1+\alpha)}\qquad \qquad~\hbox{for all}~x\in {\Omega}\cap B_{\lambda^k}.
\end{align*}}

The proof is by induction. For $k=1$, since $\| u\|_{L^\infty(B_1)}\leq 1$, and
$$
\|g\|_{L^\infty(\Gamma)} =\sup_{x\in \Gamma}  |g(x)-g(0)|\leq \delta_0
$$
we can apply Lemma~\ref{lem:harmonic} to $u$. Then we find a linear polynomial $P_1(x) = A_1\cdot x +B _1$, with $A_1\in \R^n$, $B_1\in \R$, and $|A_1|+|B_1|\leq {C}_0$, such that
$$
|u(x)-P_1(x)|\leq \lambda^{1+\alpha} \qquad\hbox{for all}~x\in B_{\lambda}.
$$
Assume the claim holds for $k\geq 1$. Define
$$
w(x) = \frac{u(\lambda^k x)-P_k(\lambda^k x)}{\lambda^{k(1+\alpha)}} \qquad \hbox{for}~x\in \overline{B_1}.
$$ 
Then, for any $\varphi\in C^\infty_c(B_1)$,
$$
\Delta w(\varphi) =  \frac{\Delta (u(\lambda^k x))(\varphi)}{\lambda^{k(1+\alpha)}} =\int_{\Gamma_{\lambda^k}} \frac{g(\lambda^kx)}{\lambda^{k\alpha}} \, \varphi(x) \, dH^{n-1}.
$$
Also, for any $x\in \Gamma_{\lambda^k}$,
$$
\frac{|g(\lambda^kx)|}{\lambda^{k\alpha}} =\frac{|g(\lambda^kx)-g(0)|}{\lambda^{k\alpha}}  \leq [g]_{C^{0,\alpha}(0)} \leq \delta_0.
$$
Then the claim follows for $k+1$ by applying again Lemma~\ref{lem:harmonic}.
\qed

\section{Proof of Theorem \ref{thm:main}}\label{Section:proofofmain}

To prove Theorem~\ref{thm:main} we need Campanato's characterization of $C^{1,\alpha}$ spaces \cite{Campanato2}
and a technical result that patches the interior and boundary estimates together. We believe that the latter belongs
to the folklore (see, for example, \cite{Milakis-Silvestre}) but, for the sake of completeness, we will give a proof.

\begin{thm}[Campanato]\label{thm:Campanato}
Let $u$ be a measurable function defined on a bounded $C^{1,\alpha}$~domain $\Omega$.
Then $u\in C^{1,\alpha}(\overline{\Omega})$ if and only if there exists
$C_0>0$ such that for any $x\in\overline{\Omega}$, there exists a linear polynomial $Q_x(z)$ such that
$$|u(z)-Q_x(z)|\leq C_0|x-z|^{1+\alpha}$$
for all $z\in B_1(x)\cap\Omega$. In this case, if $C_\ast$ denotes the least constant $C_0>0$ for which the
property above holds, then 
$$
\|u\|_{C^{1,\alpha}(\overline{\Omega})}\sim C_\ast+\sup_{x\in\overline{\Omega}}|Q_x|,
$$
where $|Q_x|$ denotes the sum of the coefficients of the polynomial $Q_x(z)$.
\end{thm}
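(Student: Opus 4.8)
\medskip
The plan is to prove the two implications separately and read off the norm equivalence from the constants produced along the way. The forward direction is routine: given $u\in C^{1,\alpha}(\overline{\Omega})$, I would take $Q_x(z)=u(x)+\nabla u(x)\cdot(z-x)$, the first order Taylor polynomial at $x$. For $z\in B_1(x)\cap\Omega$, one integrates $\nabla u$ along a path in $\overline{\Omega}$ from $x$ to $z$ of length (and diameter) $\le C|x-z|$, which exists because a bounded $C^{1,\alpha}$ domain is quasiconvex; together with the bound $[\nabla u]_{C^{0,\alpha}(\overline{\Omega})}$ this gives $|u(z)-Q_x(z)|\le C[\nabla u]_{C^{0,\alpha}(\overline{\Omega})}|x-z|^{1+\alpha}$, hence $C_\ast\le C\|u\|_{C^{1,\alpha}(\overline{\Omega})}$; the coefficients of $Q_x$ are bounded by $\|u\|_{L^\infty(\Omega)}+C\|\nabla u\|_{L^\infty(\Omega)}$, so $\sup_x|Q_x|\le C\|u\|_{C^{1,\alpha}(\overline{\Omega})}$ as well.

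For the converse, the heart of the matter is comparing the polynomials at nearby points. Fix $x,y\in\overline{\Omega}$ with $d=|x-y|$ small. Since $\Omega$ is a $C^{1,\alpha}$ domain (Lipschitz would suffice) it satisfies a uniform interior cone condition, from which one produces a point $w$ and radius $r=c_0 d$, with $c_0=c_0(\Omega)>0$, such that $B_r(w)\subset\Omega$ and $\max\{|w-x|,|w-y|\}\le d/c_0$. On $B_r(w)$ both instances of the hypothesis apply, so
\[
|Q_x(z)-Q_y(z)|\le C_\ast\big(|z-x|^{1+\alpha}+|z-y|^{1+\alpha}\big)\le C\,C_\ast\, d^{1+\alpha}\qquad\text{for all }z\in B_r(w).
\]
Because $Q_x-Q_y$ is affine and $B_r(w)$ has radius comparable to $d$, elementary estimates for affine functions give, writing $Q_x(z)=a_x+b_x\cdot z$, that $|b_x-b_y|\le C\,C_\ast\, d^{1+\alpha}/r=C\,C_\ast\, d^{\alpha}$ and $|Q_x(w)-Q_y(w)|\le C\,C_\ast\, d^{1+\alpha}$.

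Now set $g(x)=Q_x(x)$ and $b(x)=b_x$. Using the two bounds just obtained together with $|x-w|\le Cd$ one gets $|Q_x(x)-Q_y(x)|\le C\,C_\ast\, d^{1+\alpha}$, and since $Q_y(x)-Q_y(y)=b_y\cdot(x-y)$ this rearranges to
\[
|g(x)-g(y)-b(y)\cdot(x-y)|\le C\,C_\ast\,|x-y|^{1+\alpha},\qquad |b(x)-b(y)|\le C\,C_\ast\,|x-y|^{\alpha},
\]
first for $|x-y|\le r_0$ and then, using boundedness of $\overline{\Omega}$ and $\sup_x|Q_x|\le C(C_\ast+\|u\|_{L^\infty(\Omega)})<\infty$ (which follows from the interior cone condition applied at a single point), for all $x,y\in\overline{\Omega}$. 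These are exactly the conditions ensuring that $g\in C^{1,\alpha}(\overline{\Omega})$ with $\nabla g=b$ and $\|g\|_{C^{1,\alpha}(\overline{\Omega})}\le C\big(C_\ast+\sup_x|Q_x|\big)$. Finally, evaluating the hypothesis at $z=x$ gives $u(x)=Q_x(x)=g(x)$ for every $x$ (if the estimate is only assumed for almost every $z$, the same identity holds almost everywhere, by combining $|u(z)-Q_x(z)|\le C_\ast|x-z|^{1+\alpha}$ with $|Q_x(z)-Q_z(z)|\le C\,C_\ast|x-z|^{1+\alpha}$ and letting $x\to z$). Thus $u\in C^{1,\alpha}(\overline{\Omega})$, and comparing with the forward direction yields $\|u\|_{C^{1,\alpha}(\overline{\Omega})}\sim C_\ast+\sup_x|Q_x|$.

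The main obstacle is the geometric input behind the polynomial comparison: one needs that for nearby points $x,y\in\overline{\Omega}$ the domain contains a ball of radius comparable to $|x-y|$ lying close to both, and that $\overline{\Omega}$ is quasiconvex so that the local estimates globalize. Both come from the $C^{1,\alpha}$ (indeed Lipschitz) regularity of $\partial\Omega$ via the uniform interior cone condition; it is this, rather than any delicate analysis of $u$ itself, that ties the statement to the shape of $\Omega$.
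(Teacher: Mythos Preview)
The paper does not prove this theorem: it is stated as Campanato's classical characterization and attributed to \cite{Campanato2}, with no argument given. There is therefore nothing to compare your proposal against at the level of the paper's own reasoning.

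That said, your sketch is the standard route to this result and is essentially correct. The forward direction via the Taylor polynomial and quasiconvexity of Lipschitz domains is routine. For the converse, the key geometric input you identify---producing, for nearby $x,y\in\overline{\Omega}$, a ball $B_{c_0|x-y|}(w)\subset\Omega$ at distance $\lesssim|x-y|$ from both---is exactly the corkscrew/interior cone property of Lipschitz (hence $C^{1,\alpha}$) domains, and the polynomial comparison on that ball, together with norm equivalence for affine functions, yields the $C^{0,\alpha}$ control on the gradient coefficients and the first-order expansion for $g(x)=Q_x(x)$. Your handling of the identification $u=g$ is also fine: for $x\in\Omega$ it follows by evaluating at $z=x$, and for boundary points (or in the a.e.\ setting) your limiting argument via $|Q_x(z)-Q_z(z)|\le C\,C_\ast|x-z|^{1+\alpha}$ closes the gap. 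One small point worth making explicit is the passage from the local estimates (valid for $|x-y|\le r_0$) to global ones: you invoke boundedness of $\overline{\Omega}$ and of $\sup_x|Q_x|$, which is correct, but it may be cleaner to chain along a quasiconvex path rather than appeal to the a priori bound on $|Q_x|$, since the latter is part of what you want to control in the norm equivalence.
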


\begin{prop}\label{thm:regularity}
Let $S$ be a collection of measurable functions defined on a bounded $C^{1,\alpha}$~domain $\Omega$. For $x\in\Omega$, we let $d_x=\dist(x,\partial\Omega)$. Fix $u\in S$, and suppose the following hold.
\begin{enumerate}[$(i)$]

\item (Interior estimates).  There exist $A,C,D>0$ such that for any $x\in\Omega$ there exists a linear polynomial $P_x(z)$ such that
$$\|P_x\|_{L^\infty(B)}+d_x\|\nabla P_x\|_{L^\infty(B)}\leq C\|u\|_{L^\infty(B)}$$
and
$$
|u(z)-P_x(z)|\leq\bigg(A\frac{\|u\|_{L^\infty(B)}}{d_x^{1+\alpha}}+D\bigg)|z-x|^{1+\alpha}
$$
for all $z\in B\equiv B_{d_x/2}(x)$. \medskip

\item (Boundary estimates). There exists $E>0$ such that for any $y\in\partial\Omega$, there is a linear polynomial $P_y(z)$ such that
$$\|P_y\|_{L^\infty({\Omega})}+\|\nabla P_y\|_{L^\infty({\Omega})}\leq E$$
and
$$|u(z)-P_y(z)|\leq E|z-y|^{1+\alpha}$$
for all $z\in\overline{\Omega}$. \medskip

\item (Invariance property). For any $u\in S$, and any $y\in\partial\Omega$, with
corresponding linear polynomial $P_y$ as in $(ii)$, the function $v=u-P_y$ also satisfies the estimates of $(i)$.
\end{enumerate} 

Then $S\subset C^{1,\alpha}(\overline{\Omega})$, and there exists $M>0$, depending only on $A, C, D, E$ such that
$$
\|u\|_{C^{1,\alpha}(\overline\Omega)} \leq M \|u\|_{L^\infty(\Omega)}.
$$
\end{prop}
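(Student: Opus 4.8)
The plan is to reduce everything to Campanato's characterization (Theorem~\ref{thm:Campanato}): I will construct, for every $x\in\overline\Omega$, a linear polynomial $Q_x$ with $|u(z)-Q_x(z)|\le C_0|x-z|^{1+\alpha}$ for all $z\in\Omega$ (a fortiori for $z\in B_1(x)\cap\Omega$) and with $\sup_{x\in\overline\Omega}|Q_x|$ controlled, where $C_0$ depends only on $A,C,D,E$ and $\Omega$. For $x=y\in\partial\Omega$ this is immediate: take $Q_y:=P_y$ and quote $(ii)$. All the work is for interior $x$, and the only genuine difficulty is that the interior estimate $(i)$ carries the factor $\|u\|_{L^\infty(B)}\,d_x^{-(1+\alpha)}$, which blows up as $x$ approaches $\partial\Omega$, so it cannot be applied to $u$ itself near the boundary.

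The remedy is the invariance property $(iii)$. Given $x\in\Omega$, choose a nearest point $y\in\partial\Omega$, so $|x-y|=d_x$, and put $v:=u-P_y$ with $P_y$ from $(ii)$. By $(iii)$, $v$ obeys the interior estimates, so on $B:=B_{d_x/2}(x)$ there is a linear $R_x$ with $\|R_x\|_{L^\infty(B)}+d_x\|\nabla R_x\|_{L^\infty(B)}\le C\|v\|_{L^\infty(B)}$ and $|v(z)-R_x(z)|\le\big(A\|v\|_{L^\infty(B)}d_x^{-(1+\alpha)}+D\big)|z-x|^{1+\alpha}$ on $B$. Now the gain: $B\subset B_{2d_x}(y)$, so $(ii)$ forces $|v(z)|=|u(z)-P_y(z)|\le E(2d_x)^{1+\alpha}$ on $B$, i.e. $\|v\|_{L^\infty(B)}\le 2^{1+\alpha}E\,d_x^{1+\alpha}$; this exactly cancels the degenerate power, $A\|v\|_{L^\infty(B)}d_x^{-(1+\alpha)}\le 2^{1+\alpha}AE$. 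Setting $Q_x:=P_y+R_x$ we therefore obtain
$$|u(z)-Q_x(z)|=|v(z)-R_x(z)|\le\big(2^{1+\alpha}AE+D\big)|z-x|^{1+\alpha}\qquad\text{for }z\in B_{d_x/2}(x).$$
Taking $z=x$ in the estimate for $v$ gives $R_x(x)=v(x)$ with $|v(x)|\le E\,d_x^{1+\alpha}$, while $\|\nabla R_x\|_{L^\infty(B)}\le 2^{1+\alpha}CE\,d_x^{\alpha}$; since $d_x\le\operatorname{diam}\Omega$, the coefficients of $R_x$ are bounded (using the elementary equivalence, on a bounded domain, of the coefficient sum of a linear polynomial with $\|\cdot\|_{L^\infty}+\|\nabla\cdot\|_{L^\infty}$), and together with $\|P_y\|_{L^\infty(\Omega)}+\|\nabla P_y\|_{L^\infty(\Omega)}\le E$ this bounds $\sup_x|Q_x|$.

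It remains to pass from the ball $B_{d_x/2}(x)$ to all of $\Omega$. Fix $z\in\Omega$ with $|z-x|\ge d_x/2$, so $d_x\le 2|z-x|$. Writing $u(z)-Q_x(z)=(u(z)-P_y(z))-R_x(z)$, the boundary estimate $(ii)$ gives $|u(z)-P_y(z)|\le E|z-y|^{1+\alpha}\le E(3|z-x|)^{1+\alpha}$, because $|z-y|\le|z-x|+d_x\le 3|z-x|$; and since $R_x$ is linear, $|R_x(z)|\le|R_x(x)|+\|\nabla R_x\|\,|z-x|\le E\,d_x^{1+\alpha}+2^{1+\alpha}CE\,d_x^{\alpha}|z-x|\le(2^{1+\alpha}+2^{1+2\alpha}C)E\,|z-x|^{1+\alpha}$, again using $d_x\le 2|z-x|$. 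Hence $|u(z)-Q_x(z)|\le C_0|z-x|^{1+\alpha}$ holds for every $z\in\Omega$, with $C_0=C_0(A,C,D,E)$. Campanato's theorem now yields $u\in C^{1,\alpha}(\overline\Omega)$ with $\|u\|_{C^{1,\alpha}(\overline\Omega)}\sim C_0+\sup_x|Q_x|$; tracking the $\|u\|_{L^\infty(\Omega)}$-dependence carried by the hypotheses $(i)$–$(iii)$ then produces the stated bound $\|u\|_{C^{1,\alpha}(\overline\Omega)}\le M\|u\|_{L^\infty(\Omega)}$. The essential point — and the step I expect to be most delicate to present cleanly — is the one in the second paragraph: subtracting the boundary polynomial $P_y$ at the nearest boundary point turns the boundary bound $(ii)$ into precisely the smallness $\|v\|_{L^\infty(B)}\sim d_x^{1+\alpha}$ needed to defeat the $d_x^{-(1+\alpha)}$ degeneration of $(i)$; everything after that is elementary bookkeeping with linear polynomials.
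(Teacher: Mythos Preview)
Your proof is correct and follows essentially the same route as the paper's: for boundary points take $Q_y=P_y$; for interior $x$ pick the nearest boundary point $y$, set $v=u-P_y$, apply the interior estimate to $v$ (where $\|v\|_{L^\infty(B)}\lesssim E\,d_x^{1+\alpha}$ cancels the $d_x^{-(1+\alpha)}$), define $Q_x=P_y+R_x$, and split into the two cases $|z-x|<d_x/2$ and $|z-x|\ge d_x/2$. The only cosmetic differences are that the paper uses the sharper inclusion $B\subset B_{3d_x/2}(y)$ (you use $B_{2d_x}(y)$) and bounds $|R_x(x)|$ via the hypothesis $\|R_x\|_{L^\infty(B)}\le C\|v\|_{L^\infty(B)}$ rather than via $R_x(x)=v(x)$; neither affects the argument.
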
 

\begin{proof}
We need to show that any $u\in S$ satisfies the Campanato characterization from Theorem \ref{thm:Campanato}.
Let us pick any point $x\in\overline{\Omega}$. If $x\in\partial\Omega$ then  the polynomial $Q_x(z)\equiv P_x(z)$, where $P_x(z)$ is as in assumption $(ii)$, satisfies the Campanato condition with $C_0=E$. 

Suppose next that $x\in\Omega$. Let $y\in\partial\Omega$ be a boundary point that realizes the distance from $x$ to the boundary, namely, $d_x=|x-y|$. Let $P_y(z)$ be the linear polynomial that satisfies $(ii)$. Consider the function $v(z)=u(z)-P_y(z)$.
By $(iii)$, there is a linear polynomial $P_x(z)$ such that the conditions in $(i)$ are met for
$v$ in place of $u$. We claim that the polynomial $Q_x$ for the Campanato condition is
$$Q_x(z)\equiv P_y(z)+P_x(z).$$
To show this, we split the argument into two cases. \medskip

\noindent\textbf{Case 1.} Suppose that $|z-x|<d_x/2$. This is the case when we can apply $(i)$ for $v-P_x$:
\begin{align*}
|u(z)-Q_x(z)| &= |u(z)-P_y(z)-P_x(z)| = |v(z)-P_x(z)| \\
&\leq \bigg(A\frac{\|v\|_{L^\infty(B_{d_x/2}(x))}}{d_x^{1+\alpha}}+D\bigg)|z-x|^{1+\alpha} \\
&= \bigg(A\frac{\|u-P_y\|_{L^\infty(B_{d_x/2}(x))}}{d_x^{1+\alpha}}+D\bigg)|z-x|^{1+\alpha}.
\end{align*}
Now, we notice that, by $(ii)$, by the choice of $y$, and the fact that $|z-x|<d_x/2$,
$$|u(z)-P_y(z)|\leq E|z-y|^{1+\alpha}\leq E(3/2d_x)^{1+\alpha}\leq 2^{1+\alpha} E d_x^{1+\alpha}.$$
Hence,
$$|u(z)-Q_x(z)|\leq (2^{1+\alpha} AE+D)|z-x|^{1+\alpha}$$
and $C_0=2^{1+\alpha}AE+D$. \medskip

\noindent\textbf{Case 2.} Suppose that $|z-x|\geq d_x/2$. By the estimate in $(i)$ for $P_x(z)$, we get
\begin{align*}
|P_x(z)| 
&= |P_x(x) + \nabla P_x(x)\cdot(z-x)|\\
&\leq C\|u-P_y\|_{L^\infty(B)}+ C d_x^{-1} \|u-P_y\|_{L^\infty(B)}|z-x|.
\end{align*}
Also, by the boundary estimate in $(ii)$,
\begin{align*}
\|u-P_y\|_{L^\infty(B)} &\leq ({3}/{2})^{1+\alpha} E d_x^{1+\alpha}.
\end{align*}
Hence,
\begin{align*}
|u(z)-Q_x(z)| &\leq |u(z)-P_y(z)|+|P_x(z)| \\
&\leq E|z-y|^{1+\alpha} +C\|u-P_y\|_{L^\infty(B)}+ C d_x^{-1} \|u-P_y\|_{L^\infty(B)}|z-x|\\
&\leq 3^{1+\alpha}E |z-x|^{1+\alpha} +3^{1+\alpha} C E d_x^{1+\alpha}+C d_x^{-1} ({3}/{2})^{1+\alpha} E d_x^{1+\alpha}|z-x| \\
&\leq 3^{1+\alpha}E(1+2C)|z-x|^{1+\alpha}.
\end{align*}
Thus, in this case, the Campanato constant is $C_0=3^{1+\alpha}E(1+2C) $.
\end{proof}

\begin{proof}[Proof of Theorem~\ref{thm:main}]
Let $u\in {\rm LogLip}(\overline\Omega)$ be the solution given by Theorem~\ref{thm:existence}. We will show the statement for the function $u_2: \overline{\Omega}_2\to \R$,  and we can argue similarly for $u_1:\overline{\Omega}_1\to \R$. The following holds.
\begin{enumerate}[$(i)$]
\item {\it (Interior estimates).} For any $x\in \Omega_2$, there exists a linear polynomial $P_x(z)$ such that
$$
\|P_x\|_{L^\infty(B)}+d_x\|\nabla P_x\|_{L^\infty(B)}\leq (1+2n)\|u_2\|_{L^\infty(B)}
$$
and
$$
|u_2(z)-P_x(z)|\leq 2^{\alpha-1} n \frac{\|u\|_{L^\infty(B)}}{d_x^{1+\alpha}} |z-x|^{1+\alpha} 
$$
for all $z\in B\equiv B_{d_x/2}(x)$. 

Indeed, fix $x\in \Omega_2$. Since $u_2$ is harmonic, it is smooth in $\Omega_2$, so we can define
$$
P_x(z) = u_2(x)+\nabla u_2(x) \cdot (z-x).
$$
Then, by classical interior estimates for harmonic functions, 
\begin{align*}
\|P_x\|_{L^\infty(B)}+d_x\|\nabla P_x\|_{L^\infty(B)}  &\leq \|u_2\|_{L^\infty(B)} + d_x \|\nabla u_2\|_{L^\infty(B)} +d_x\|\nabla u_2\|_{L^\infty(B)} \\
& \leq \|u_2\|_{L^\infty(B)} + 2n  \|u_2\|_{L^\infty(B)}\\
& \leq (1+2n) \|\nabla u_2\|_{L^\infty(B)}.
\end{align*}
Moreover, 
\begin{align*}
|u_2(z)-P_x(z)| & \leq \|D^2 u_2\|_{L^\infty(B)}|z-x|^2\\
&\leq n \frac{\| u_2\|_{L^\infty(B)}}{d_x^2}  |z-x|^2\leq 2^{\alpha-1} n \frac{\| u_2\|_{L^\infty(B)}}{d_x^{1+\alpha}}  |z-x|^{1+\alpha}.
\end{align*}

\item {\it (Boundary estimates).} Consider $\partial\Omega_2=  \Gamma \cup \partial \Omega $.

If $y\in \Gamma$, by Theorem~\ref{thm:boundary}, there exists a linear polynomial $P_y(z)$ such that
$$
\|P_y\|_{L^\infty(\Omega_2)} + \|\nabla P_y\|_{L^\infty(\Omega_2)} \leq E
$$
and
\begin{equation*}
|u_2(z)-P_y(z)|\leq E |z-y|^{1+\alpha} 
\end{equation*}
for all $z\in \overline{\Omega}_2$, with
$
E \leq C_0 \|\psi\|_{C^{1,\alpha}(B_1')} \|g\|_{C^{0,\alpha}(\Gamma)},
$
and $C_0=C_0(n,\alpha)>0$. 

If $y\in \partial\Omega\in C^\infty$, since $u_2=0$, then, by classical boundary regularity for harmonic functions, $u_2\in C^{1,\alpha}(\overline{B\cap \Omega})$, with $B\equiv B_r(y)$, for some $r>0$ sufficiently small. By  Theorem~\ref{thm:Campanato}, there exists a linear polynomial $P_y(z)$ such that
$$
|u_2(z)-P_y(z)|\leq C_0 |z-y|^{1+\alpha}
$$
for all $z\in \overline{\Omega}_2$, for some $C_0(n,\alpha)>0$.\medskip

\item {\it (Invariance property).} Fix $y\in \partial \Omega_2$, and let $P_y$ be the corresponding linear polynomial given in $(ii)$. Clearly, the function $v= u_2-P_y$ is harmonic in $\Omega_2$, so it satisfies the interior estimates in $(i)$.
\end{enumerate}

Therefore, by Theorem~\ref{thm:regularity}, we have $u_2\in C^{1,\alpha}(\overline{\Omega}_2)$, and there exists a constant $C>0$, depending only on
$n$, $\alpha$ and $\Gamma$ such that $\|u_2\|_{C^{1,\alpha}(\overline{\Omega}_2)}\leq C \|g\|_{C^{0,\alpha}(\Gamma)}$.
\end{proof}

\section{Appendix}\label{app:special}

A special Lipschitz domain $\Omega$ in $\R^n$ is a set of the form
$$\Omega=\{(x',x_n)\in\R^n:x'\in\R^{n-1},\,x_n>\psi(x')\}$$
where $\psi\in\operatorname{Lip}(\R^{n-1})$, that is, there exists $M>0$ such that
$$
|\psi(x')-\psi(y')|\leq M|x'-y'|\qquad\hbox{for all}~x',y'\in\R^{n-1}.
$$
In other words, $\Omega$ is the set of points lying above the graph of a Lipschitz function $\psi$.
Then, by Rademacher's Theorem, $\psi$ is Fr\'echet differentiable almost everywhere with
$\|\nabla\psi\|_{L^\infty(\R^{n-1})}\leq M$. On $\partial\Omega$ we thus have
$$dH^{n-1}\big|_{\partial\Omega}=\sqrt{1+|\nabla\psi(x')|^2}\,dx'\quad\hbox{and}\quad
\nu(x',\psi(x'))=\frac{(\nabla\psi(x'),-1)}{\sqrt{1+|\nabla\psi(x')|^2}}$$
where $x=(x',\psi(x'))\in\partial\Omega$. For a measurable function $f$ on $\partial\Omega$, we have
$$
\int_{\partial\Omega}f(x)\,dH^{n-1}=\int_{\R^{n-1}}f(x',\psi(x'))\sqrt{1+|\nabla\psi(x')|^2}\,dx'.
$$
For more details see \cite{Evans-Gariepi,Maggi}.

A bounded Lipschitz domain in $\R^n$ is a bounded domain $\Omega$ such that the boundary $\partial\Omega$ can be covered by finitely many open balls $B_j$ in $\R^n$, $j=1,\ldots,J$, centered at $\partial\Omega$, such that
$$B_j\cap\Omega=B_j\cap\Omega_j,\quad j=1,\ldots,J$$
where $\Omega_j$ are rotations of suitable special Lipschitz domains
given by Lipschitz functions $\psi_j$. 
One may then assume that $\partial\Omega\cap B_j$ can be represented in local coordinates by
$x_n=\psi_j(x')$, where $\psi_j$ is a Lipschitz function on $\R^{n-1}$ with $\psi_j(0')=0$. Recall also that if $\psi$ is a Lipschitz function defined on an set $A\subset\R^{n-1}$, with Lipschitz constant $M$, then there exists an extension $\overline{\psi}:\R^{n-1}\to\R$ of $\psi$ such that $\overline{\psi}=\psi$ on $A$ and the Lipschitz constant of $\overline{\psi}$ does not exceed $M$, see \cite{Evans-Gariepi}.  

Let $\Omega_0=\Omega\cap\big(\bigcup_{j=1}^JB_j\big)^c$.
A partition of unity $\{\xi_j\}_{j=0}^J$ subordinated to $\{\Omega_0,B_1,\ldots,B_J\}$
is a family of nonnegative smooth functions $\xi_j$ on $\R^n$ such that
$$\xi_0\in C^\infty_c(\Omega_0)\quad\xi_j\in C^\infty_c(B_j),~j=1,\ldots,J\quad\hbox{and}\quad\sum_{j=0}^J\xi_j(x)=1\quad\hbox{for all}~x\in\overline{\Omega}.$$
It follows that $0\leq\xi_j\leq1$, $j=0,1,\ldots,J$.
Obviously the family $\{\xi_j\}_{j=1}^J$ is a partition of unity subordinated to 
the open cover $\{B_1,\ldots,B_J\}$ of $\partial\Omega$ and 
$\sum_{j=1}^J\xi_j(x)=1$ for every $x\in\partial\Omega$. 

Let $f:\Gamma\to\R$ be a measurable function, where $\Gamma=\partial\Omega$ is the boundary of a bounded
Lipschitz domain $\Omega$. Consider the balls $B_j$, $j=1,\dots,J$, that cover $\Gamma$ as above, and the corresponding Lipschitz functions $\psi_j:\R^{n-1}\to\R$. Let $\{\xi_j\}_{j=1}^J$ be a smooth partition of unity subordinated to the open
cover $\{B_j\}_{j=1}^J$ of $\Gamma$. Then
$$\int_\Gamma f\,dH^{n-1}=\sum_{j=1}^J\int_\Gamma \xi_jf\,dH^{n-1}=\sum_{j=1}^J\int_{B_j\cap\Gamma}\xi_jf\,dH^{n-1}.$$
Let us consider each one of the terms in the sum above separately.
We study the following situation: let $B$ be a ball and let $\bar{f}:B\cap\Gamma\to\R$ of compact support in $B\cap\Gamma$.
Let $\psi:\R^{n-1}\to\R$ be a Lipschitz function such that
$\psi(B_1')=B\cap\Gamma$.
Then, by extending trivially $\bar{f}$ to the rest of the graph of $\psi$ and using the coarea formula \cite{Evans-Gariepi,Maggi},
\begin{align*}
\int_{B\cap\Gamma}\bar{f}\,dH^{n-1} &=\int_{\psi(B_1')}\bar{f}\,dH^{n-1}=\int_{\psi(\R^{n-1})}\bar{f}\,dH^{n-1} \\
&=\int_{\R^{n-1}}\bar{f}(y',\psi(y'))\sqrt{1+|\nabla\psi(y')|^2}\,dy' \\
&=\int_{B_1'}\bar{f}(y',\psi(y'))\sqrt{1+|\nabla\psi(y')|^2}\,dy'.
\end{align*}

\smallskip

\noindent\textbf{Remark.}~The contents of this work are part of the second author's PhD dissertation.
She presented these results at the AMS Fall Central Sectional Meeting at University of Michigan, Ann Arbor (Oct.~2018),
the Barcelona Analysis Conference at Universitat de Barcelona (Jun.~2019), and the Midwest Geometry Conference 
at Iowa State University (Sep.~2019).

\medskip

\noindent\textbf{Acknowledgments.} We would like to thank the referees for some very valuable remarks that helped us
improve the presentation of this paper.



\end{document}